\def\textmatrix#1&#2\\#3&#4\\{\bigl({#1 \atop #3}\ {#2 \atop #4}\bigr)}
\def\dispmatrix#1&#2\\#3&#4\\{\left({#1 \atop #3}\ {#2 \atop #4}\right)}
\newcommand{\beg}{\begin{equation}}
        \newcommand{\eeg}{\end{equation}}
\newcommand{\ben}{\begin{eqnarray*}}
        \newcommand{\een}{\end{eqnarray*}}
\newtheorem{thm}{Theorem}[section]
\newtheorem{cor}[thm]{Corollary}
\newtheorem{lem}[thm]{Lemma}
\newtheorem{prop}[thm]{Proposition}
\numberwithin{equation}{section} \theoremstyle{definition}
\newtheorem{defn}[thm]{Definition}
\newtheorem{eg}[thm]{Example}
\def\textmatrix#1&#2\\#3&#4\\{\bigl({#1 \atop #3}\ {#2 \atop #4}\bigr)}
\def\dispmatrix#1&#2\\#3&#4\\{\left({#1 \atop #3}\ {#2 \atop #4}\right)}
\newcommand{\C}{\mathbb{C}}
\newcommand{\D}{\mathbb{D}}
\newcommand{\N}{\mathbb{N}}
\newcommand{\R}{\mathbb{R}}
\newcommand{\T}{\mathbb{T}}
\newcommand{\BL}{\mathbb{L}}
\newcommand{\X}{\mathbb{X}}
\newcommand{\Y}{\mathbb{Y}}
\newcommand{\HS}{\mathcal{H}}
\newcommand{\KS}{\mathcal{K}}
\newcommand{\Aut}{\mathrm{Aut}}
\newcommand{\Int}{\mathrm{Int}}
\title[Spectral set, complete spectral set and dilation for Banach space operators]{Spectral set, complete spectral set and dilation for Banach space operators}
\author[Jana and Pal]{Swapan Jana and Sourav Pal}
\address[Swapan Jana]{Mathematics Department, Indian Institute of Technology Bombay,
		Powai, Mumbai - 400076, India.} \email{ swapan.jana@iitb.ac.in , swapan.math2015@gmail.com}
\address[Sourav Pal]{Mathematics Department, Indian Institute of Technology Bombay,
		Powai, Mumbai - 400076, India.} \email{souravpal@iitb.ac.in , souravmaths@gmail.com}
\keywords{Banach space operator, Spectral set, Isometric dilation, Complete spectral set, Bohr's inequality}	
\subjclass[2020]{46C15, 47A20, 47A25, 47B01, 32A10}
\begin{document}

\begin{abstract}
Famous results due to von Neumann, Sz.-Nagy and Arveson assert that the following four statements are equivalent; a Hilbert space operator $T$ is a contraction; the closed unit disk $\overline{\mathbb D}$ is a spectral set for $T$; $T$ can be dilated to a Hilbert space isometry; $\overline{\mathbb D}$ is a complete spectral set for $T$. In this article, we show by counter examples that no two of them are equivalent for Banach space operators. If $\mathcal F_r$ is the family of all complex Banach space operators having norm less than or equal to $r$ and if $D_R$ denotes the  open disk in the complex plane with centre at the origin and radius $R$, then we prove by an application of Bohr's theorem that $\overline{D}_R$ is the minimal spectral set for $\mathcal F_r$ if and only if $r=R \slash {3}$. Then we prove the equivalence of the following two facts: the Bohr radius of $D_R$ is $R\slash 3$ and $\sup \{ r>0\,:\, \overline{D}_R \text{ is a spectral set for } \mathcal F_r \}=R \slash 3$. For a complex Banach space $\mathbb X$, we show that the following statements are equivalent: $(i)$ $\mathbb X$ is a Hilbert space; $(ii)$ $\overline{\mathbb D}$ is a spectral set for the forward shift operator $M_z$ on $\ell_2(\mathbb X)$; $(iii)$ $\overline{\mathbb D}$ is a spectral set for the backward shift operator $\widehat{M}_z$ on $\ell_2(\X)$; $(iv)$ $\overline{\mathbb D}$ is a spectral set for every strict contraction on $\mathbb X$; $(v)$ $\overline{\mathbb D}$ is a complete spectral set for every contraction $T$ on $\mathbb X$ with $\|T\|=1$; $(vi)$ $\overline{\mathbb D}$ is a complete spectral set of the identity operator $I_\X$ on $\X$.

\end{abstract}

\maketitle


\section{Introduction}\label{Sec:01}
\vspace{0.2cm}

\noindent
We begin with a few basic definitions, notations and terminologies that we shall follow throughout the paper. We denote by $\D, \, \overline{\D}$ and $\T$, the open unit disk, the closed unit disk and the unit circle respectively in the complex plane $\C$. The open disk with radius $r$ that has centre at the origin or at $\alpha$ is denoted by $D_r$ or $D_r(\alpha)$ respectively. For a domain $\Omega\subset \C$, $H^\infty(\Omega)$ consists of all bounded complex-valued holomorphic functions on $\Omega$. All operators in this article are bounded linear operators acting on Banach or Hilbert spaces over the field of complex numbers. An operator $T$ is said to be a \textit{contraction} or \textit{strict contraction} if $\|T\| \leq 1$ or $\|T\| < 1$ respectively. The algebra of all bounded operators on a Banach space $\X$ is denoted by $\mathcal{B}(\X)$. An \textit{isometry} $V$ between Banach spaces $\X$ and $\Y$ is a linear map $V: \X \to \Y$  satisfying $\|V(x)\| =\|x\|$ for all $x\in \X$. A \textit{unitary} between two Banach spaces is a surjective isometry. For a Banach space $\X$ and $p\in [1, \infty)$, the space $\ell_p(\X)$ consists of all sequences $\{x_n\}$ in $\X$ such that $\sum_{n=1}^\infty \|x_n\|^p < \infty$ and norm of $\{x_n\}$ is defined as $\|\{x_n\}\| = \{ \sum_{n=1}^\infty \|x_n\|^p \}^{\frac{1}{p}}$. Also, for any $n\geq 1$, the space $\ell_p^n(\X) := \underbrace{\X\oplus_p \X \oplus_p \cdots \oplus_p \X}_{n-times}$ consists of all $n$-tuples $(x_1, \cdots , x_n)$ with $x_i \in \X$, equipped with the norm $\|(x_1, \cdots, x_n)\| = \left(\sum_{i=1}^n \|x_i\|^p\right)^{1\slash p}$. The \textit{forward-shift operator} and the \textit{backward-shift operator} on $\ell_p(\X)$ are denoted by $M_z$ and $\widehat{M}_z$ respectively. When $\X$ is a Hilbert space, the operator $\widehat{M}_z$ on $\ell_2(\X)$ coincides with the adjoint of $M_z$. The spectrum of an operator $T$ is denoted by $\sigma(T)$.

\medskip

Following the terminologies due to von Neumann \cite{JVNII} and Arveson \cite{WAII}, we say that a compact set $K \subset \C$ is a \textit{spectral set} for a Banach space operator $T$ if $\sigma(T) \subseteq K$ and von Neumann's inequality holds on $K$, i.e.
\begin{equation} \label{eqn:001A}
\|f(T)\| \leq \|f\|_{\infty, \, K}= \sup_{z\in K}\ \, |f(z)| 
\end{equation}
for all rational functions $f=p /q$, where $p,q \in \C[z]$ with $q$ having no zeros in $K$. Note that $f(T)=p(T)q(T)^{-1}$, where $q(T)$ is invertible as $\sigma(T) \subseteq K$ and $q$ has no zeros in $K$. The algebra of all rational functions with poles off $K$ is called the \textit{rational algebra} over $K$ and is denoted by $Rat(K)$. For each $n\geq 1$, let $Rat_n(K)$ be the algebra of all $n \times n$ matrices with entries in $Rat(K)$. An element $F=[f_{ij}]_{n \times n}$ in $ Rat_n(K)$ is called a \textit{matricial rational function} of order $n$ over $K$ and norm of $F$ is defined by
\[
\|F\|_{\infty, K}= \sup_{z\in K} \, \|[f_{ij}(z)]_{n \times n}\|,
\]
where $\|[f_{ij}(z)]_{n \times n}\|$ is the usual operator norm of the scalar matrix $[f_{ij}(z)]_{n \times n}$. For an operator $T$ acting on a Banach space $\X$ and for $F \in Rat_n(K)$, $F(T)$ is defined to be the operator $F(T)= [f_{ij}(T)]_{n \times n}$, where $f_{ij}(T)$ is defined by (\ref{eqn:001A}) and following Pisier's terminology as in \cite{GPII}, the block matrix of operators $[f_{ij}(T)]_{n \times n}$ is assumed to act on the Banach space $\ell_2^n(\X)$. A compact set $K \subset \C$ is a \textit{complete spectral set} for an operator $T$ on a Banach space $\X$, if $\sigma(T) \subseteq K$ and matricial von Neumann's inequality holds on $K$, i.e.
\[
\|F(T)\| \leq \|F\|_{\infty, K}
\]
for all $F \in Rat_n(K)$ and for all $n \geq 1$. Needless to mention that if $K$ is a complete spectral set for $T$ then it is a spectral set for $T$. We now define spectral set and minimal spectral set for a family of operators which will be useful in the context of this paper.

\begin{defn}
Let $\mathcal F$ be a family of Banach space operators. A compact set $K \subset \C$ is called a \textit{spectral set} for the family $\mathcal F$ if $K$ is a spectral set for every operator in $\mathcal F$. Moreover, $K$ is said to be the \textit{minimal spectral set} for $\mathcal F$ if $K$ is a spectral set for $\mathcal F$ and for any proper compact subset $\widehat{K}$ of $K$, there is an operator $T \in \mathcal F$ such that $\widehat{K}$ is not a spectral set for $T$.
\end{defn}

We are now going to define isometric dilation of a Banach space contraction and this requires a brief motivation for the sake of readers' understanding how the definition of Banach space dilation arrives from that of Hilbert space dilation. Suppose $T$ is a contraction on a Hilbert space $\mathcal{H}.$ An isometry $V$ on a Hilbert space $\mathcal{K}$ is said to be an \textit{isometric dilation} of $T$ if there exists an isometry $W:\mathcal{H}\to \mathcal{K}$ such that
\begin{equation} \label{eqn:001}
f(T)=W^*f(V)W \ \ \text{ or equivalently } \ \ f(\widehat{T})= P_{_{W(\HS)}} f(V)|_{W(\HS)}
\end{equation}
for all $f\in Rat (\overline{\D})$, where $\widehat{T}$ is defined by $\widehat{T}:= \widehat{W} T \widehat{W}^{-1}: W(\HS) \to W(\HS)$ with $\widehat{W}$ being the unitary $\widehat{W}:= W:\HS \to W(\HS)$ and $P_{_{W(\HS)}}$ being the orthogonal projection of $\KS$ onto $W(\HS)$. Note that the dilation space $\KS$ is isomorphic with the orthogonal direct sum $W(\HS) \oplus_2 \BL$, where $\BL$ is the orthogonal complement of $W(\HS)$ in $\KS$. The first obstruction in the way of defining dilation for a Banach space contraction is that the isometry $W$ does not have an adjoint in Banach space setting. In view of (\ref{eqn:001}) it is evident that isometric dilation in Hilbert space can be defined in an equivalent way by avoiding the Hilbert space adjoint of $W$. If we want to adopt this definition for dilation in Banach space, then the next issue is that a Banach space projection can have norm strictly greater than $1$ which can cause imbalance in norm in either side of (\ref{eqn:001}). However, in case of Banach space if the bigger space $\KS$ is chosen to be isomorphic with $W(\HS) \oplus_2 \BL$ for some Banach space $\BL$, then the projection of $\KS$ onto $W(\HS)$ as in (\ref{eqn:001}) becomes a norm-one projection and the norm-issue is resolved. Taking cue from these facts, dilation in Banach space was defined in \cite{JPR} in the following way which generalizes the definition of Hilbert space dilation.

\begin{defn}\label{Definition of Dilation of contractions on Banach space}
Suppose $T$ is a contraction acting on a Banach space $\mathbb{X}$. An isometry $V$ on a Banach space $\widetilde{\mathbb{X}}$ is said to be an \textit{isometric dilation} of $T$ if there is an isometry $W:\mathbb{X}\to \widetilde{\mathbb{X}}$ and a closed linear subspace $\BL$ of $\widetilde{\X}$ such that $\widetilde{\mathbb{X}}$ is isomorphic with $ W(\mathbb{X}) \oplus_2 \BL$ and the operator $\widehat{T}:= \widehat{W} T \widehat{W}^{-1}: W(\X) \to W(\X)$ satisfies
\[
f(\widehat{T})= P_{_{W(\mathbb{X})}} f(V)|_{W(\X)}
\]
for all rational functions $f \in Rat (\overline{\D})$, where $\widehat{W}$ is the unitary (i.e. surjective isometry) $\widehat{W}:= W:\mathbb{X}\to W(\mathbb{X})$ and $P_{_{W(\mathbb{X})}}$ is the norm-one projection of $\widetilde{\X}$ onto $W(\mathbb{X})$.
\end{defn}

These three notions, spectral set, complete spectral set and dilation go hand in hand for a Hilbert space contraction, the classic text \cite{VIP} due to Paulsen provides a detailed study on this. In \cite{JVNII}, von Neumann profoundly found the equivalence of the two facts: a Hilbert space operator $T$ is a contraction and the closed unit disk $\overline{\D}$ is a spectral set for $T$. An appealing next step is Sz.-Nagy's famous dilation theorem \cite{BSN} which states that a Hilbert space operator is a contraction if and only if it dilates to an isometry. Later, Arveson's path-breaking result \cite{WAII} established the equivalence of complete spectral set and dilation of a Hilbert space operator. Thus, combining all these revolutionary results together we have the following theorem which is the primary source of motivation for us.

\begin{thm}\label{thm:401}
For a Hilbert space operator $T$ the following are equivalent:

\begin{enumerate}
\item[(i)] $T$ is a contraction;
\item[(ii)] $T$ dilates to an isometry;
\item[(iii)] $\overline{\D}$ is spectral set of $T$;
\item[(iv)] $\overline{\D}$ is complete spectral set of $T$.
\end{enumerate}
\end{thm}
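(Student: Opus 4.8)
The plan is to prove the four equivalences by running the cycle $(i)\Rightarrow(ii)\Rightarrow(iv)\Rightarrow(iii)\Rightarrow(i)$, so that the single genuinely substantial step---the construction of a dilation---feeds the remaining, comparatively soft, implications. Two of the links require almost nothing. The step $(iv)\Rightarrow(iii)$ is immediate, since a complete spectral set is, by definition, a spectral set. For $(iii)\Rightarrow(i)$ I would test von Neumann's inequality \eqref{eqn:001A} on the coordinate function $f(z)=z$, which belongs to $Rat(\overline{\D})$; this yields $\|T\|=\|f(T)\|\le\|f\|_{\infty,\overline{\D}}=\sup_{|z|\le 1}|z|=1$, so $T$ is a contraction.

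The heart of the argument is $(i)\Rightarrow(ii)$, the Sz.-Nagy dilation, and this is where I expect the real work to sit. Given a contraction $T$ on $\HS$, I would introduce the defect operator $D_T=(I-T^{*}T)^{1/2}\ge 0$ and build the Sch\"affer isometry $V$ on $\KS=\HS\oplus_2\HS\oplus_2\HS\oplus_2\cdots$ acting by $V(h_0,h_1,h_2,\dots)=(Th_0,\,D_Th_0,\,h_1,\,h_2,\dots)$, i.e.
\[
V=\begin{pmatrix} T & 0 & 0 & \cdots \\ D_T & 0 & 0 & \cdots \\ 0 & I & 0 & \cdots \\ 0 & 0 & I & \cdots \\ \vdots & & & \ddots \end{pmatrix}.
\]
The verification is a pair of routine block computations. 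First, $V^{*}V=I$ follows from $T^{*}T+D_T^{2}=I$, so $V$ is an isometry. Second, a telescoping calculation gives $P_{\HS}V^{n}|_{\HS}=T^{n}$ for every $n\ge 0$, since iterating $V$ on $(h_0,0,0,\dots)$ leaves $T^{n}h_0$ in the first coordinate (equivalently, $\HS$ is co-invariant under $V$). To upgrade this from powers to all of $Rat(\overline{\D})$, I would use that each $f=p/q$ with $q$ nonvanishing on $\overline{\D}$ is holomorphic on a neighbourhood of $\overline{\D}$ and hence has a power series $\sum_n a_n z^n$ converging there; applying $P_\HS(\cdot)|_\HS$ term by term to $f(V)=\sum_n a_n V^n$ recovers $f(T)=\sum_n a_n T^n$. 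Taking $W$ to be the inclusion $\HS\hookrightarrow\KS$ (an isometry) and $\BL=W(\HS)^{\perp}$, we obtain $\KS=W(\HS)\oplus_2\BL$ with $P_{W(\HS)}$ the orthogonal, hence norm-one, projection, so $V$ is an isometric dilation of $T$ in the required sense.

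For $(ii)\Rightarrow(iv)$ I would first dilate the isometry $V$ to a unitary $U$ on a still larger Hilbert space $\mathcal{L}$---either through the minimal unitary dilation of an isometry, or via the Wold decomposition $V\cong S\oplus W$ into a unilateral shift and a unitary---and observe that dilations compose, so that $T$ dilates to the unitary $U$ and $F(T)=P_{\ell_2^n(\HS)}F(U)|_{\ell_2^n(\HS)}$ holds entrywise for every matricial function $F=[f_{ij}]\in Rat_n(\overline{\D})$. The point of passing to $U$ is that the spectral theorem realizes $f\mapsto f(U)$ as a $*$-representation, so applied coordinatewise it gives $\|F(U)\|=\sup_{z\in\sigma(U)}\|[f_{ij}(z)]\|\le\|F\|_{\infty,\overline{\D}}$ because $\sigma(U)\subseteq\T\subseteq\overline{\D}$. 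Since $F(T)$ is the compression of $F(U)$ by the norm-one orthogonal projection onto $\ell_2^n(\HS)$, we conclude $\|F(T)\|\le\|F(U)\|\le\|F\|_{\infty,\overline{\D}}$ for all $n$, which is $(iv)$ and closes the cycle.

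The one place demanding genuine care, beyond the dilation construction itself, is the passage from powers to general rational (and matricial rational) functions---both in the dilation identity and in the spectral estimate. In each case the holomorphy of $Rat(\overline{\D})$ functions on a neighbourhood of $\overline{\D}$, together with the containment $\sigma(U)\subseteq\T$, makes the extension clean, so the essential difficulty remains the existence of the isometric dilation in $(i)\Rightarrow(ii)$.
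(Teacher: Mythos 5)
Your proposal is correct, but it differs from the paper in a basic way: the paper offers no proof of Theorem \ref{thm:401} at all --- it presents the statement as the assembly of three classical results, crediting (i)$\Leftrightarrow$(iii) to von Neumann \cite{JVNII}, (i)$\Leftrightarrow$(ii) to Sz.-Nagy \cite{BSN}, and the equivalence with (iv) to Arveson \cite{WAII}, and then uses the theorem only as motivation and as an ingredient elsewhere (e.g. the implications out of (i) in the proof of Theorem \ref{thm:103}). Your cyclic argument $(i)\Rightarrow(ii)\Rightarrow(iv)\Rightarrow(iii)\Rightarrow(i)$ is the standard self-contained textbook route (cf. Paulsen \cite{VIP}): the Sch\"affer block matrix handles (i)$\Rightarrow$(ii), and your (ii)$\Rightarrow$(iv) replaces Arveson's general machinery of completely positive maps --- which is what one needs for an arbitrary compact $K$ with connected complement --- by the elementary facts that an isometry extends to a unitary and that the continuous functional calculus of a unitary is a $*$-representation, hence completely contractive after amplification to $M_n$; this is exactly what makes the disk case soft, and it is a perfectly legitimate shortcut here. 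Two points you state loosely but which do hold: first, holomorphy on a neighbourhood of $\overline{\D}$ does not by itself yield a power series converging on that neighbourhood (a neighbourhood need not be a disk), but for $f=p/q\in Rat(\overline{\D})$ every pole has modulus strictly greater than $1$, so the Taylor series converges absolutely on a disk of radius $>1$ and agrees with $p(A)q(A)^{-1}$ for any operator $A$ with $\|A\|\le 1$ by the Riesz--Dunford calculus, which legitimizes your term-by-term compression; second, composing the dilation identity $f(T)=W^{*}f(V)W$ with the passage from $V$ to $U$ requires the intermediate space to be invariant (or at least semi-invariant) under $U$, which your extension/Wold construction does supply since $U$ restricts to $V$ there. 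Neither point is a gap, only wording to tighten.
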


In this paper, we show by counter examples that no two of the conditions of Theorem \ref{thm:401} are equivalent in general for a Banach space operator $T$. It is evident from of conditions (i) \& (iii) that the operator-norm versus spectral set is an interesting interplay for Hilbert space operators. However, this is no longer true for Banach space operators as we show in Section \ref{Sec:03} via the following theorem.
\begin{thm} \label{thm:new-011}
Let $\mathcal{F}_r$ be the set of all Banach space operators with norm less than or equal to $r$. Then the closed disk $\overline{D}_R=\{ z\in \C \, : \, |z|\leq R \}$ is the minimal spectral set for $\mathcal{F}_r$ if and only if $r= \frac{R}{3}$.
\end{thm}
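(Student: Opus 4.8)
The plan is to reduce the whole statement to the behaviour of a single extremal operator together with Bohr's theorem. Working on $\ell_1(\C)$, let $M_z$ denote the forward-shift and put $S:=rM_z$, so that $\|S\|=r$ and $S\in\mathcal{F}_r$. Writing a rational function $f$ with poles off $\overline{D}_R$ as its Taylor series $f=\sum_{n\ge 0}a_nz^n$ about the origin, a one-line computation on the basis vector $e_0$ gives $\|f(S)\|=\sum_{n\ge 0}|a_n|r^n$, while $\sigma(S)=r\,\sigma(M_z)=\overline{D}_r$. Moreover, for every $T\in\mathcal{F}_r$ the power-series functional calculus yields $\|f(T)\|\le\sum_{n\ge 0}|a_n|\,\|T\|^n\le\sum_{n\ge 0}|a_n|r^n=\|f(S)\|$. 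Thus $S$ realises the largest value of $\|f(T)\|$ over the family, and since the scalars $\lambda I$ with $|\lambda|\le r$ show that the spectra of members of $\mathcal{F}_r$ fill exactly $\overline{D}_r$, the entire family is governed by $S$ and by the scalars.

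It follows that a compact set $\widehat{K}$ is a spectral set for $\mathcal{F}_r$ if and only if $\overline{D}_r\subseteq\widehat{K}$ and $\sum_{n\ge 0}|a_n|r^n\le\|f\|_{\infty,\widehat{K}}$ for every $f\in Rat(\widehat{K})$, where the $a_n$ are the Taylor coefficients of $f$ at the origin (these converge on $\overline{D}_r$ because the poles of $f$ lie outside $\widehat{K}\supseteq\overline{D}_r$). Specialising to $\widehat{K}=\overline{D}_R$ and invoking Bohr's theorem in the form that the Bohr radius of $D_R$ equals $R/3$ --- that is, $\sum_n|a_n|\rho^n\le\|f\|_{\infty,D_R}$ holds for all such $f$ precisely when $\rho\le R/3$ --- I would conclude that $\overline{D}_R$ is a spectral set for $\mathcal{F}_r$ exactly when $r\le R/3$, with the value $r=R/3$ being the borderline case.

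The two directions away from $r=R/3$ then follow quickly. If $r>R/3$, the sharpness of the Bohr radius produces an $f$, holomorphic on $\overline{D}_R$, with $\|f\|_{\infty,\overline{D}_R}<\sum_n|a_n|r^n=\|f(S)\|$; hence von Neumann's inequality fails on $\overline{D}_R$ for $S$, so $\overline{D}_R$ is not even a spectral set for $\mathcal{F}_r$ and a fortiori not the minimal one. If $r<R/3$, then $3r<R$ and the identical Bohr estimate (the Bohr radius of $D_{3r}$ being $r$) shows that the strictly smaller disk $\overline{D}_{3r}$ is already a spectral set for $\mathcal{F}_r$; thus $\overline{D}_R$, while a spectral set, is not minimal.

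What remains, and what I expect to be the main obstacle, is the minimality when $r=R/3$: I must show that no proper compact $\widehat{K}\subsetneq\overline{D}_R$ is a spectral set for $\mathcal{F}_r$. If $\widehat{K}$ fails to contain $\overline{D}_r$, a point $w_0\in\overline{D}_r\setminus\widehat{K}$ gives the scalar operator $w_0I$, whose spectrum is not contained in $\widehat{K}$, and we are done. The hard case is $\overline{D}_r\subseteq\widehat{K}\subsetneq\overline{D}_R$, where the spectral condition can never be violated and one must instead exhibit $f\in Rat(\widehat{K})$ with $\sum_n|a_n|r^n>\|f\|_{\infty,\widehat{K}}$, a failure then detected by $S$. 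The natural device is once more the sharpness of Bohr's radius: one chooses extremal (Blaschke- or geometric-type) functions concentrated on the portion of $\overline{D}_R$ that $\widehat{K}$ omits, so that $\|f\|_{\infty,\widehat{K}}$ falls strictly below $\|f\|_{\infty,\overline{D}_R}$ while the Bohr functional $\sum_n|a_n|r^n$ stays near $\|f\|_{\infty,\overline{D}_R}$. The delicate part is to make this comparison work uniformly for an arbitrary omitted region --- since $\sup_{\widehat{K}}|f|$ only registers what has actually been deleted --- and it is precisely this quantitative balance that singles out the exact value $r=R/3$.
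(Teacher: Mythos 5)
Your proposal is correct, and essentially identical to the paper's argument, for everything except the crux. The reduction of the whole family $\mathcal{F}_r$ to the scalars together with the single extremal operator $S=rM_z$ on $\ell_1(\C)$ (with $\|f(S)\|=\sum_n|a_n|r^n$) is exactly how the paper proves $r\le R/3$, and your observation that $\overline{D}_{3r}$ is already a spectral set when $r<R/3$ is the paper's second necessity step verbatim; your equivalence ``$\widehat K$ is spectral for $\mathcal{F}_r$ iff $\overline{D}_r\subseteq\widehat K$ and the Bohr functional $\sum_n|a_n|r^n$ is dominated by $\|f\|_{\infty,\widehat K}$ for every $f\in Rat(\widehat K)$'' is sound, since the poles of such $f$ have modulus $>r$. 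The genuine gap is the one you flag yourself: minimality at $r=R/3$ is not proved, and the route you sketch cannot close it. After the paper's reduction (a proper compact $K\supseteq\overline{D}_{R/3}$ may be enlarged to $\widehat K=\overline{D}_R\setminus D_\delta(\alpha)$ with $D_\delta(\alpha)$ an \emph{interior} disk of the annulus $\{R/3<|z|<R\}$), the set $\widehat K$ still contains the full circle $|z|=R$; hence for any $f$ holomorphic on a neighbourhood of $\overline{D}_R$ --- in particular for every Bohr-extremal or Blaschke-type function of the kind you propose --- the maximum principle forces $\|f\|_{\infty,\widehat K}=\|f\|_{\infty,\overline{D}_R}$, so deleting an interior region never lowers the sup, and Bohr's inequality at the exact radius $R/3$ gives $\sum_n|a_n|(R/3)^n\le\|f\|_{\infty,\widehat K}$: no violation is possible with holomorphic witnesses. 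Any witness must be genuinely rational with a pole \emph{inside} the deleted disk.

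That is the missing idea the paper supplies: it tests $\widehat K$ with $g_c(z)=\frac{z\slash R-c}{1-cz\slash R}+\frac{\delta\slash(z-\alpha)-c}{1-c\delta\slash(z-\alpha)}\in Rat(\widehat K)$, whose second summand has a pole at $\alpha+c\delta\in D_\delta(\alpha)$, bounds $\|g_c\|_{\infty,\widehat K}$ on the two boundary circles by the maximum principle, and plays this off against the operator $T_{\widetilde K}$ of multiplication by $z$ on $\mathcal{A}(\overline{D}_{R/3})$ (which lies in $\mathcal{F}_{R/3}$ and has spectrum $\overline{D}_{R/3}$), letting $c\to 1$ so that a point evaluation tends to $2$; see \eqref{eq:new202}--\eqref{eq:new205}. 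Your closing worry about the ``quantitative balance'' is well placed and should not be glossed over: since $0\in\widehat K$ and $g_c(0)=-c-\frac{\delta+c\alpha}{\alpha+c\delta}\to -2$ as $c\to 1$, the norms $\|g_c\|_{\infty,\widehat K}$ themselves approach $2$, so the uniformity in $c$ of the bound $M<2$ in \eqref{eq:new202} is precisely the delicate point of the limiting comparison; moreover, in your single-operator framework one has $\|g_c(T_{\widetilde K})\|=\|g_c\|_{\infty,\overline{D}_{R/3}}\le\|g_c\|_{\infty,\widehat K}$ automatically (because $\overline{D}_{R/3}\subseteq\widehat K$), so if one wants a genuine violation the stronger extremal $S=rM_z$, for which $\|g_c(S)\|$ is the full coefficient sum $\sum_n|a_n(g_c)|r^n$ rather than a sup over $\overline{D}_{R/3}$, is the right operator to test with, and the comparison of the first-order (in $1-c$) deficiencies of $\sum_n|a_n(g_c)|r^n$ and $\|g_c\|_{\infty,\widehat K}$ must be carried out explicitly. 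In short: your two directions away from $r=R/3$ are complete and match the paper, but the minimality assertion at $r=R/3$ --- the heart of the theorem --- remains unproved in your proposal, and holomorphic extremal functions are provably incapable of proving it.
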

This is Theorem \ref{thm:302} in this paper and is proved using Bohr's theorem. Also, this generalizes a previous result from \cite{VEK & VIM} due to Kacnel\'{s}on and Macaev, see Theorem \ref{thm:301} of this paper. The fact that no two of the other conditions of Theorem \ref{thm:401} are equivalent for Banach space operators is established in Section \ref{Subsec:041} by different counter examples. Interestingly, Theorem \ref{thm:new-011} sets a connection between these two fundamental notions--  spectral set for Banach space operators and Bohr radius of a disk. Indeed, in Theorem \ref{thm:303} we prove the equivalence of the following two facts: the Bohr radius of $D_R$ is $R\slash 3$ and $\sup \{ r>0\,:\, \overline{D}_R \text{ is a spectral set for } \mathcal F_r \}=R \slash 3$.

\smallskip

While investigating the validity of Theorem \ref{thm:401} for Banach space operators, Foias \cite{CFI} achieved a very interesting result which states the following: every contraction $T$ on a complex Banach space $\X$ with $\|T\|=1$ has $\overline{\D}$ as a spectral set if and only if $\X$ is a Hilbert space. In \cite{JPR}, the authors of this article and Roy found a dilation theoretic characterization of a Hilbert space. More precisely, it is proved in \cite{JPR} that a complex Banach space $\X$ is a Hilbert space if and only if every strict contraction $T \in \mathcal B(\X)$ dilates to an isometry if and only if the function $A_T(x)=(\|x\|^2-\|Tx\|^2)^{\frac{1}{2}}$ defines a norm on $\X$ for every strict contraction $T \in \mathcal B(\X)$. Another contribution of this article is to add to the account a few new conditions each of which is necessary and sufficient for a Banach space to become a Hilbert space as shown below.

\begin{thm}
Let $\X$ be a complex Banach space. Then the following are equivalent:

\begin{enumerate}
\item[(i)] $\X$ is a Hilbert space;

\smallskip

\item[(ii)] $\overline{\D}$ is a spectral set for $\widehat{M}_z$ on $\ell_2(\X)$;

\smallskip

\item[(iii)] $\overline{\D}$ is a spectral set for $M_z$ on $\ell_2(\X)$;

\smallskip

\item[(iv)] $\overline{\D}$ is a spectral set for every strict contraction on $\X$;

\smallskip

\item[(v)] $\overline{\D}$ is a complete spectral set for every contraction $T$ on $\X$ with $\|T\|=1$;

\smallskip

\item[(vi)] $\overline{\D}$ is a complete spectral set for the identity operator $I_\X$ on $\X$.
\end{enumerate}
\end{thm}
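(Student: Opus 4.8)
The plan is to prove that (i) implies each of (ii)--(vi), and then that each of (ii)--(vi) implies (i); these two directions give the full equivalence. The forward implications are the soft ones: if $\X$ is a Hilbert space then so is $\ell_2(\X)$, the operators $M_z$ and $\widehat{M}_z$ are a contraction (indeed an isometry and a co-isometry) on it, and $I_\X$ together with every (strict) contraction is a contraction on $\X$; hence von Neumann's inequality yields (ii),(iii),(iv) and Arveson's theorem (Theorem \ref{thm:401}) yields the complete-spectral-set conclusions (v) and (vi). The content therefore lies entirely in the six converses, for which I would deploy three tools: a direct functional-calculus computation on the shifts, a duality argument, and the theorem of Foias \cite{CFI}.

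For $(iii)\Rightarrow(i)$ I would feed the forward shift a disk automorphism. Fix $c\in\D$ and set $f_c(z)=(c+z)/(1+\bar c z)$, so that $f_c\in Rat(\overline{\D})$ and $\|f_c\|_{\infty,\overline{\D}}=1$. Applying the spectral-set inequality to a vector supported in the first two coordinates, $x=(x_0,x_1,0,0,\dots)$, one computes $f_c(M_z)x$ explicitly by expanding $(I+\bar c M_z)^{-1}$ as a Neumann series; the tail is a geometric series that sums in closed form, and $\|f_c(M_z)x\|\le\|x\|$ becomes
\[
|c|^2\|x_0\|^2+\|(1-|c|^2)x_0+cx_1\|^2+(1-|c|^2)\|x_1-\bar c x_0\|^2\le\|x_0\|^2+\|x_1\|^2 .
\]
The key observation is that the choice $c=1/\sqrt2$ collapses this to $\|u+v\|^2+\|u-v\|^2\le 2\|u\|^2+2\|v\|^2$ with $u=x_0,\ v=\sqrt2\,x_1$; replacing $(u,v)$ by $(u+v,u-v)$ produces the reverse inequality, so the parallelogram law holds and $\X$ is a Hilbert space. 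For $(ii)\Rightarrow(i)$ I would avoid repeating the computation and instead use duality: under the isometric identification $\ell_2(\X)^*=\ell_2(\X^*)$ the Banach-space adjoint of $\widehat{M}_z$ is the forward shift $M_z$ on $\ell_2(\X^*)$, and $\|f(\widehat{M}_z)\|=\|f(\widehat{M}_z)^*\|=\|f(M_z)\|_{\ell_2(\X^*)}$ for every $f\in Rat(\overline{\D})$. Thus (ii) for $\X$ is exactly (iii) for $\X^*$, which by the previous step forces $\X^*$ to be a Hilbert space, whence so is $\X$, being a closed subspace of the Hilbert space $\X^{**}$.

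The remaining three converses are shorter. For $(vi)\Rightarrow(i)$ I would test the complete-spectral-set inequality on \emph{constant} matricial functions $F\equiv A$, where $A$ is a scalar $n\times n$ matrix; then $\|F\|_{\infty,\overline{\D}}=\|A\|$ while $F(I_\X)=[a_{ij}I_\X]$ acts on $\ell_2^n(\X)$, so the inequality reads $\|A\otimes I_\X\|\le\|A\|$. Taking $A$ and $A^{-1}$ to be the $2\times2$ rotation by $\pi/4$ (a unitary of norm one) forces equality, which is precisely the parallelogram law. For $(v)\Rightarrow(i)$ a complete spectral set is in particular a spectral set, so (v) says $\overline{\D}$ is a spectral set for every norm-one contraction on $\X$, and Foias' theorem \cite{CFI} gives (i) at once. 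For $(iv)\Rightarrow(i)$ I would argue by contraposition through the same theorem: if $\X$ were not a Hilbert space there would be a norm-one contraction $T_0$ and an $f\in Rat(\overline{\D})$ with $\|f(T_0)\|>\|f\|_{\infty,\overline{\D}}$; since $f(sT_0)\to f(T_0)$ in norm as $s\to1^-$ while each $sT_0$ is a strict contraction, some $sT_0$ would violate (iv).

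I expect the forward-shift computation in $(iii)\Rightarrow(i)$ to be the main obstacle, not because any single step is hard but because it is where the theorem is decided: one must choose an automorphism and a test vector whose image, after the geometric tail is summed, reduces \emph{exactly} to the parallelogram law rather than to some inequality that merely resembles it. The role of the parameter $c=1/\sqrt2$ is what makes the argument clean, and identifying it is the crux; once it is in hand, duality disposes of the backward shift and the cited theorem of Foias dispatches conditions (iv) and (v).
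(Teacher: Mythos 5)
Your proposal is correct, and while its skeleton (soft forward directions via von Neumann--Arveson, all content in the converses) matches the paper, several of your converses take genuinely different routes. For $(iii)\Rightarrow(i)$ the paper also feeds disk automorphisms to the shift on a two-coordinate test vector, but it only extracts $\|y-\alpha x\|\le\|x-\bar{\alpha}y\|$ under the restriction $\|x\|=\|y\|$ and then invokes Ficken's criterion ($\|ax+by\|=\|bx+ay\|$ for real $a,b$); your choice $c=1/\sqrt{2}$, after summing the geometric tail (your displayed identity checks out: the image vector is $\bigl(cx_0,\,(1-|c|^2)x_0+cx_1,\,(1-|c|^2)(-\bar{c})^{k-2}(x_1-\bar{c}x_0)\bigr)$), yields the full parallelogram law and hence Jordan--von Neumann directly, with no norm restriction and no appeal to Ficken --- a cleaner endgame. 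For $(ii)\Rightarrow(i)$ the paper simply redoes the computation on $(\mathbf{0},x,y,\mathbf{0},\dots)$ for $\widehat{M}_z$, whereas your duality reduction is valid but leans on facts worth stating: $\ell_2(\X)^*\cong\ell_2(\X^*)$ isometrically for \emph{every} Banach space (counting measure is atomic, so no Radon--Nikodym hypothesis is needed), this identification carries $(\widehat{M}_z)^*$ to $M_z$, rational calculus commutes with Banach-space adjoints, and $\X$ inherits the parallelogram law from $\X^{**}$. For $(iv)\Rightarrow(i)$ and $(v)\Rightarrow(i)$ the paper does not use Foias's theorem as a black box: it derives $\|(T-\alpha I_\X)y\|\le\|(I_\X-\bar{\alpha}T)y\|$ by letting $r\to1$ in $\phi_\alpha(rT)$ and then reproduces the Foias--Pisier rank-one argument ending again in Ficken; your contrapositive --- Foias supplies a norm-one $T_0$ and $f=p/q$ with $\|f(T_0)\|>\|f\|_{\infty,\overline{\D}}$, and $f(sT_0)\to f(T_0)$ in norm as $s\to1^-$ since $q(sT_0)$ stays invertible and inversion is continuous --- is shorter and rigorous, buying brevity at the price of quoting the cited theorem rather than its method. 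Finally, your $(vi)\Rightarrow(i)$ is the paper's argument in mild disguise: the paper uses $F(z)=z\cdot(\text{rotation by }\pi/4)$ plus the substitution $x'=(x+y)/2$, $y'=(x-y)/2$ to reverse the inequality, while you use the constant matrix $A$ and note that contractivity of both $A\otimes I_\X$ and $A^{-1}\otimes I_\X$ forces $A\otimes I_\X$ to be an isometry; the two devices are equivalent.
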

This is Theorem \ref{thm:103} in this paper.

\smallskip

\section{Bohr's theorem and norm vs. spectral set for a Banach space operator}\label{Sec:03}

\vspace{0.2cm}

\noindent Recall that $H^{\infty}(\D)$ denotes the algebra of all bounded complex-valued holomorphic functions defined on the unit disk $\D$. In 1914, H. Bohr \cite{HB} proved the following remarkable result whose impact is extraordinary till date.
\begin{thm} [H. Bohr, \cite{HB}]
For any $f\in H^{\infty}(\D)$ if $f(z)=\sum_{n=0}^{\infty} a_nz^n$, then
\begin{equation} \label{eq:301}
\sum_{n=0}^{\infty} |a_nz^n| \leq \|f\|_{\infty,\,  \overline{\D}} \quad \text{ for } \, \, \,  |z| \leq \frac{1}{3}.
\end{equation}
Moreover, the bound $\dfrac{1}{3}$ is the sharpest.
\end{thm}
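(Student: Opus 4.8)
The plan is to normalise so that $\|f\|_{\infty,\,\overline{\D}}\le 1$ (the general case follows by scaling $f$), and then to control the entire series $\sum_{n\ge 0}|a_n|r^n$ through a single sharp coefficient estimate, namely
\[
|a_n|\le 1-|a_0|^2 \qquad (n\ge 1).
\]
This inequality is the heart of the argument, and I would establish it in two steps. First, for the case $n=1$, I would pass to the M\"obius-composed function $g=(f-a_0)/(1-\overline{a_0}\,f)$, which maps $\D$ into $\overline{\D}$ and satisfies $g(0)=0$; the Schwarz lemma applied to $g(z)/z$ then yields $|g'(0)|\le 1$, which, after a short computation of $g'(0)=a_1/(1-|a_0|^2)$, unwinds to $|a_1|\le 1-|a_0|^2$. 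Second, for general $n\ge 1$, I would reduce to the previous case by averaging over $n$-th roots of unity: with $\omega=e^{2\pi i/n}$, the function $\tfrac1n\sum_{k=0}^{n-1}f(\omega^k z)$ still maps $\D$ into $\overline{\D}$, retains only the powers $z^{jn}$, and hence can be written as $\Psi(z^n)$ for some $\Psi:\D\to\overline{\D}$ whose zeroth and first Taylor coefficients are $a_0$ and $a_n$; applying the $n=1$ estimate to $\Psi$ gives $|a_n|\le 1-|a_0|^2$.

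With the coefficient bound in hand, writing $a=|a_0|\in[0,1]$ and summing a geometric series gives, for $0\le r<1$,
\[
\sum_{n=0}^{\infty}|a_n|r^n\le a+(1-a^2)\sum_{n=1}^{\infty}r^n=a+(1-a^2)\frac{r}{1-r}.
\]
The remaining step is a one-variable estimate: I would show that the right-hand side is $\le 1$ for every $a\in[0,1]$ precisely when $r\le \tfrac13$. Subtracting $1$ and factoring out $(1-a)$ reduces the required inequality to $(1+a)\frac{r}{1-r}\le 1$, whose extreme case $a\to 1^-$ gives $2\frac{r}{1-r}\le 1$, i.e.\ $r\le\tfrac13$. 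This proves (\ref{eq:301}).

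For the sharpness of the constant $\tfrac13$, I would test the inequality against the one-parameter family of Blaschke factors $f_a(z)=\frac{a-z}{1-az}$ with $a\in(0,1)$, each of which has $\|f_a\|_{\infty,\,\overline{\D}}=1$. A geometric expansion gives $|a_0|=a$ and $|a_n|=(1-a^2)a^{n-1}$ for $n\ge1$, so that
\[
\sum_{n=0}^{\infty}|a_n|r^n=a+\frac{(1-a^2)r}{1-ar}=1+(1-a)\,\frac{r(1+2a)-1}{1-ar}.
\]
For any fixed $r>\tfrac13$ the numerator $r(1+2a)-1$ tends to $3r-1>0$ as $a\to1^-$, while $(1-a)>0$ and $1-ar\to 1-r>0$, so choosing $a$ sufficiently close to $1$ forces the sum to exceed $1=\|f_a\|_{\infty,\,\overline{\D}}$; thus no radius larger than $\tfrac13$ can work.

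I expect the main obstacle to be the coefficient inequality $|a_n|\le1-|a_0|^2$, particularly its passage from $n=1$ to general $n$ via the root-of-unity averaging. The point is that the crude Cauchy estimate $|a_n|\le1$ is too weak to reach the constant $\tfrac13$: it is exactly the $|a_0|$-dependent sharpening that makes the geometric tail small enough for the threshold $r=\tfrac13$ to emerge, and the same family $f_a$ that saturates the coefficient bound also certifies that $\tfrac13$ cannot be improved.
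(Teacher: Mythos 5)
Your argument is correct, and it is worth noting at the outset that the paper contains no proof of this statement to compare against: the theorem is quoted from Bohr's 1914 paper, with the remark that the sharp constant $1\slash 3$ is due independently to M.~Riesz, Schur and Wiener. What you have reconstructed is precisely the classical Wiener-style proof. Its engine is the coefficient inequality $|a_n|\le 1-|a_0|^2$ for $n\ge 1$, which you obtain correctly: the computation $g'(0)=a_1\slash(1-|a_0|^2)$ for the M\"obius composite is right, and the root-of-unity averaging $\frac{1}{n}\sum_{k=0}^{n-1}f(\omega^k z)=\Psi(z^n)$ with $\Psi(0)=a_0$, $\Psi'(0)=a_n$ is the standard reduction of general $n$ to $n=1$. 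The summation step $a+(1-a^2)\frac{r}{1-r}\le 1 \iff (1+a)\frac{r}{1-r}\le 1$ is sound, and your sharpness computation with $f_a(z)=\frac{a-z}{1-az}$, including the identity $a+\frac{(1-a^2)r}{1-ar}=1+(1-a)\,\frac{r(1+2a)-1}{1-ar}$, checks out and certifies exactly the form of sharpness asserted in the paper (for each fixed $r>\frac13$ some norm-one function violates the inequality). Two small points deserve a line each in the write-up, though neither is a gap in the ideas. First, dispose of constant $f$ at the start: after normalizing $\|f\|_{\infty,\overline{\D}}=1$, if $|a_0|=1$ the maximum principle forces $f$ to be constant and (\ref{eq:301}) is trivial; in the nonconstant case $|f|<1$ on $\D$, which is what makes $1-\overline{a_0}f$ zero-free and lets you apply the Schwarz lemma to $g$ as a self-map of the \emph{open} disk rather than of $\overline{\D}$, as you wrote. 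Second, in the averaging step you should justify that $\Psi$ is holomorphic on all of $\D$ — for instance, the averaged function is invariant under $z\mapsto\omega z$, or simply its Taylor coefficients $a_{jn}$ are bounded by $1$, so $\sum_j a_{jn}w^j$ has radius of convergence at least $1$. You are also right about where the difficulty lies: the crude Cauchy bound $|a_n|\le 1$ only yields the radius $\frac12$ for the majorant series of a norm-one function, and it is exactly the $|a_0|$-dependent sharpening that produces the threshold $\frac13$.
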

Actually, Bohr's primary bound for this result was $1 \slash 6$ which was further improved to $1\slash 3$ by  M. Riesz, I. Schur, and N. Wiener, independently. A few years later, S. Sidon \cite{Sidon} gave a different proof to Bohr's theorem, which was subsequently rediscovered by M. Tomi\'c in \cite{Tomic}. The bound $1\slash 3$ is the sharpest in the sense that for any $\alpha\in \D$ with $|\alpha|> 1 \slash 3$, there exists $g\in H^\infty(\D)$ satisfying $\sum_{n=0}^\infty |\frac{g^{(n)}(0)}{n!}\alpha^n| > \|g\|_{\infty, \, \overline{\D}}$, where $g^{(n)}(0)$ is the $n$-th derivative of $g$ at $0$. The inequality (\ref{eq:301}) is known as \textit{Bohr inequality} and the sharpest bound $1\slash 3$ is called the \textit{Bohr radius} of $\D$ with respect to the algebra $H^{\infty}(\D)$. Study of Bohr inequality and Bohr radius for holomorphic functions on domains in several variables has been  an active area of research, e.g., see \cite{LA, OB, HPB & DK, AD & LF, DFOOS} and the references therein.

\smallskip

Theorem \ref{thm:401} shows that $\overline{\D}$ is a spectral set for a Hilbert space operator if and only if it is a contraction. In \cite{VEK & VIM}, V. E. Kacnel\'{s}on and V. I. Macaev found the following fascinating result which shows that the operator-norm versus spectral set issue is not settled in the exact same way for Banach space operators.
\begin{thm}[Theorem 2, \cite{VEK & VIM}]\label{thm:301}
The closed disk $\overline{D}_\rho$ with centre at the origin and radius $\rho$, is a spectral set of all Banach space contractions if and only if $\rho \geq 3$.
\end{thm}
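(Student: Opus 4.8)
The plan is to derive both implications from Bohr's theorem via a simple rescaling, using the forward shift on $\ell_1$ as the extremal contraction in the converse. For the \emph{if} direction, suppose $\rho \geq 3$ and let $T$ be any contraction on a Banach space $\X$, so that $\sigma(T) \subseteq \overline{\D} \subseteq \overline{D}_\rho$ and the spectral-containment requirement is automatic. Given $f \in Rat(\overline{D}_\rho)$, its poles lie at distance greater than $\rho \geq 3 > 1 \geq \|T\|$ from the origin, so the Taylor expansion $f(z) = \sum_{n=0}^\infty a_n z^n$ has radius of convergence exceeding $\|T\|$, and the holomorphic functional calculus gives $f(T) = \sum_{n=0}^\infty a_n T^n$ with norm-convergence. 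I would then set $g(w) := f(\rho w)$, which is holomorphic across $\overline{\D}$ (hence lies in $H^\infty(\D)$) with $\|g\|_{\infty, \overline{\D}} = \|f\|_{\infty, \overline{D}_\rho}$ and Taylor coefficients $a_n \rho^n$. Evaluating Bohr's inequality \eqref{eq:301} at $|w| = 1/3$ yields $\sum_n |a_n|(\rho/3)^n \leq \|f\|_{\infty, \overline{D}_\rho}$, and since $\rho \geq 3$ forces $(\rho/3)^n \geq 1$, the triangle inequality gives
\[
\|f(T)\| \leq \sum_{n=0}^\infty |a_n|\,\|T\|^n \leq \sum_{n=0}^\infty |a_n| \leq \sum_{n=0}^\infty |a_n|\,(\rho/3)^n \leq \|f\|_{\infty, \overline{D}_\rho},
\]
which is exactly von Neumann's inequality on $\overline{D}_\rho$; thus $\overline{D}_\rho$ is a spectral set for $T$.

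For the \emph{only if} direction I would argue the contrapositive: assuming $\rho < 3$, I would exhibit a single contraction violating von Neumann's inequality on $\overline{D}_\rho$. Put $\alpha := 1/\rho > 1/3$. To convert the sharpness of the Bohr radius into a quantitative estimate, I would take the disk automorphism $g_a(w) = (a-w)/(1-aw)$, for which $\|g_a\|_{\infty, \overline{\D}} = 1$; a direct computation of its Taylor coefficients $b_n$ gives $\sum_n |b_n|\alpha^n = a + \frac{(1-a^2)\alpha}{1-a\alpha}$, which exceeds $1$ once $a$ is chosen suitably (for $\rho$ near $3$ one takes $a$ close to $1$, matching the threshold $\alpha > 1/(1+2a) \to 1/3$). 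Since $g_a$ is holomorphic across $\overline{\D}$, its Taylor polynomials $P_N$ converge to it uniformly on $\overline{\D}$, so for $N$ large I may replace $g_a$ by the polynomial $P_N$ and still retain $\sum_{n=0}^N |b_n|\alpha^n > \|P_N\|_{\infty, \overline{\D}}$. Rescaling via $p(z) := P_N(z/\rho) = \sum_{n=0}^N a_n z^n$, with $a_n = b_n \alpha^n$, preserves both sides: $\sum_n |a_n| = \sum_n |b_n|\alpha^n$ and $\|p\|_{\infty, \overline{D}_\rho} = \|P_N\|_{\infty, \overline{\D}}$, so $\sum_n |a_n| > \|p\|_{\infty, \overline{D}_\rho}$.

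It then remains to realise $\sum_n |a_n|$ as a lower bound for an operator norm. I would take $T$ to be the nilpotent forward shift on $\ell_1^{N+1}(\C)$, defined by $T e_k = e_{k+1}$ for $k < N$ and $T e_N = 0$; this is a contraction with $\sigma(T) = \{0\} \subseteq \overline{D}_\rho$, so spectral containment holds trivially. Applying $p$ gives $p(T) e_0 = \sum_{n=0}^N a_n e_n$, whose $\ell_1$-norm is exactly $\sum_n |a_n|$, whence $\|p(T)\| \geq \sum_n |a_n| > \|p\|_{\infty, \overline{D}_\rho}$ and $\overline{D}_\rho$ fails to be a spectral set for $T$. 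I expect the main obstacle to lie entirely in this converse: one must pin down an \emph{explicit} extremal so as to turn the sharpness of Bohr's constant into a strict violation, handle the passage from the rational automorphism to a genuine polynomial (so that a finite-dimensional nilpotent operator with trivial spectrum suffices), and recognise that the $\ell_1$-shift is precisely the contraction saturating the bound $\|f(T)\| = \sum_n |a_n|$ that the triangle inequality of the first part made unavoidable.
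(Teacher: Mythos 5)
Your proof is correct. The paper itself never proves this statement --- it quotes it from Kacnel\'son--Macaev --- but it proves the generalization, Theorem \ref{thm:302}, using exactly your two ingredients: the sufficiency direction is the same rescaled power-series estimate via Bohr's inequality (compare the displayed estimate (\ref{eq:304})), and the necessity uses the forward shift on $\ell_1(\C)$, for which $\|f(rM_z)\| = \sum_n |a_n| r^n$, as your extremal contraction. Where you genuinely refine the paper's template is in the converse: the paper applies the spectral-set inequality directly to an $H^\infty$ witness of Bohr sharpness, which strictly speaking lies outside the rational functional calculus that defines a spectral set; your truncation to the Taylor polynomials $P_N$ of the explicit extremal $g_a(w)=(a-w)/(1-aw)$, combined with the finite-dimensional nilpotent $\ell_1$-shift, keeps every step inside polynomial calculus and makes the spectral containment $\sigma(T)=\{0\}\subseteq \overline{D}_\rho$ trivial even when $\rho<1$, a case the infinite shift cannot reach since $\sigma(M_z)=\overline{\D}$. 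One small point you leave implicit: the closed form $\sum_n |b_n|\alpha^n = a + \frac{(1-a^2)\alpha}{1-a\alpha}$ presupposes $a\alpha<1$, i.e.\ $a<\rho$, so for $\rho\leq 1$ the parameter $a$ must be taken in the window $\max\{0,\frac{\rho-1}{2}\} < a < \min\{1,\rho\}$, which is nonempty for every $0<\rho<3$; and in any case, if $a\alpha\geq 1$ the truncated sums $\sum_{n\leq N}|b_n|\alpha^n$ diverge as $N\to\infty$ while $\|P_N\|_{\infty,\overline{\D}}\to 1$, so the violation only becomes easier. With that noted, both directions stand as written.
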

The theorem of Kacnel\'{s}on and Macaev indicates a possible connection between Bohr radius and spectral set of Banach space operators which we explore and determine conclusively in this Section. We begin with the following generalization of Bohr's theorem for a disk $D_R$ with centre at the origin and radius $R$.

\begin{thm}[Generalized Bohr's Theorem]  \label{prop:301}
Let $D_R=\{z\in \C \, : \, |z|<R\}$. Then the Bohr radius of $D_R$ with respect to $H^\infty(D_R)$ is $\frac{R}{3}$.
\end{thm}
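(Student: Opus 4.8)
The plan is to deduce the statement directly from the classical Bohr theorem on $\D$ (stated above) by means of the affine change of variables $z \mapsto z/R$, which sets up a norm-preserving correspondence between $H^\infty(\D)$ and $H^\infty(D_R)$ that transports all the relevant power-series data. First I would take an arbitrary $f \in H^\infty(D_R)$ with Taylor expansion $f(z) = \sum_{n=0}^\infty a_n z^n$ and associate to it the function $g(w) := f(Rw)$ for $w \in \D$. Then $g \in H^\infty(\D)$ with $g(w) = \sum_{n=0}^\infty (a_n R^n) w^n$, and since $z \mapsto z/R$ maps $\overline{D}_R$ bijectively onto $\overline{\D}$, one has $\|g\|_{\infty,\, \overline{\D}} = \|f\|_{\infty,\, \overline{D}_R}$. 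Applying the classical Bohr bound to $g$ gives $\sum_{n=0}^\infty |a_n R^n|\,|w|^n \le \|f\|_{\infty,\, \overline{D}_R}$ whenever $|w| \le 1/3$. Substituting $w = z/R$ turns the factor $R^n |w|^n$ into $|z|^n$ and the constraint $|w| \le 1/3$ into $|z| \le R/3$, so I obtain $\sum_{n=0}^\infty |a_n z^n| \le \|f\|_{\infty,\, \overline{D}_R}$ for all $|z| \le R/3$. Since $f$ was arbitrary, the Bohr radius of $D_R$ is at least $R/3$.

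For the reverse (sharpness) inequality I would run the same substitution backwards. Fix any $\beta$ with $R/3 < |\beta| < R$ and set $\alpha := \beta/R$, so that $\alpha \in \D$ with $|\alpha| > 1/3$. By the sharpness clause of the classical Bohr theorem recorded after its statement, there is $g \in H^\infty(\D)$ with $\sum_{n=0}^\infty \left|\frac{g^{(n)}(0)}{n!}\alpha^n\right| > \|g\|_{\infty,\, \overline{\D}}$. Setting $f(z) := g(z/R)$ gives $f \in H^\infty(D_R)$ whose $n$-th Taylor coefficient at $0$ equals $g^{(n)}(0)/(n!\,R^n)$ and whose sup norm over $\overline{D}_R$ equals $\|g\|_{\infty,\, \overline{\D}}$. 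Evaluating the Bohr sum of $f$ at $\beta$ then yields $\sum_{n=0}^\infty \left|\frac{g^{(n)}(0)}{n!\,R^n}\beta^n\right| = \sum_{n=0}^\infty \left|\frac{g^{(n)}(0)}{n!}\alpha^n\right| > \|g\|_{\infty,\, \overline{\D}} = \|f\|_{\infty,\, \overline{D}_R}$, which reproduces exactly the failing inequality for $g$ at $\alpha$. Hence no radius strictly larger than $R/3$ can serve as a Bohr radius, and the two estimates together pin it at $R/3$.

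Since the whole argument is a bijective rescaling, there is no genuine analytic obstacle here: the content is entirely imported from the classical theorem on $\D$. The only points requiring care are the bookkeeping of how the Taylor coefficients and the sup norm transform under $z \mapsto z/R$, and confirming that the substitution sends the sharpness example on $\D$ \emph{faithfully} onto $D_R$ (so that it produces an actual failure at $\beta$ rather than merely a one-sided estimate), together with the elementary observation that restricting attention to $\beta \in D_R$ keeps the associated $\alpha$ inside the annulus $1/3 < |\alpha| < 1$ where the classical counterexample is available.
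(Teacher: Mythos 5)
Your proposal is correct and is essentially the same argument as the paper's own proof: both transfer the classical Bohr theorem on $\D$ to $D_R$ via the rescaling $\phi(\xi)=R\xi$, which preserves sup norms and transforms the Taylor coefficients by $a_n \mapsto a_nR^n$, and both obtain sharpness by pulling back the classical counterexample at $\alpha=\beta/R$. Your version is even slightly more careful than the paper's in restricting to $|\beta|<R$ (so that $\alpha\in\D$) and in keeping the absolute values on the coefficients in the failing sum, where the paper's displayed inequality drops them in a harmless typo.
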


\begin{proof}
Let us consider the biholomorphic map $\phi: \D \rightarrow D_R$ defined by $\phi(\xi)=R\xi$ for all $\xi$ in the unit disk $\D$. Then for any $f \in H^{\infty}(D_R)$, the function $g=f_{\circ}\phi \in H^{\infty}(\D)$ and $\|f\|_{\infty, \, \overline{D}_R}=\|g\|_{\infty, \, \overline{\D}}$. So, if $f(\xi)=\sum_{n=0}^{\infty} \, a_n\xi^n$, then $g(\xi)=f(\phi(\xi))=\sum_{n=0}^{\infty} \, a_nR^n\xi^n$. Since Bohr radius of $\D$ is $1 \slash 3$, it follows that $\sum_{n=0}^{\infty} \, |a_n||R\xi|^n \leq \|g\|_{\infty, \, \overline{\D}}$ for all $\xi$ satisfying $|\xi|\leq 1 \slash 3$. Substituting $z=R\xi$, we have that $\sum_{n=0}^{\infty} \, |a_n||z|^n \leq \|g\|_{\infty, \, \overline{\D}}=\|f\|_{\infty, \, \overline{D}_R}$ for all $z=R\xi$ satisfying $|\xi|\leq 1\slash 3$, i.e., for all $z$ satisfying $|z|\leq R\slash 3$. Also, for any $\beta$ satisfying $|\beta|> R \slash 3$, we have $|\beta \slash R|> 1 \slash 3$ and hence there is a function $g_1 \in H^{\infty}(\D)$ with $g_1(\xi)=\sum_{n=0}^{\infty} \, b_n\xi^n$ such that
\begin{equation} \label{eqn:new-0221}
\sum_{n=0}^{\infty} \, |b_n||\beta \slash R|^n > \|g_1\|_{\infty, \, \overline{\D}}.
\end{equation}
Consider the function $f_1(\xi)=g_1( \xi \slash R)=\sum_{n=0}^{\infty} \, b_n(\xi \slash R)^n$ which is in $H^{\infty}(D_R)$. Then $\|g_1\|_{\infty, \, \overline{\D}}=\|f_1\|_{\infty, \, \overline{D}_R}$ and at the point $\xi = \beta$, we have $f_1(\beta)=g_1( \beta \slash R)=\sum_{n=0}^{\infty} \, b_n(\beta \slash R)^n$. It follows from (\ref{eqn:new-0221}) that $\sum_{n=0}^{\infty} \, b_n|\beta \slash R|^n > \|g_1\|_{\infty, \, \overline{\D}}=\|f_1\|_{\infty, \, \overline{D}_R}$. This shows that the Bohr radius of $D_R$ with respect to $H^{\infty}(D_R)$ is $R\slash 3$.
\end{proof}

For a compact set $K\subset \C$, consider the algebra $\mathcal{A}(K)$ consisting of complex-valued functions that are holomorphic in the interior of $K$ and are continuous on $K$, i.e.,
\begin{equation}\label{eq:303A}
\mathcal{A}(K) = \left\{f: K \to \C ~|~ \mbox{ $f$ is holomorphic in $\Int(K)$ and continuous on $K$}\right\},
\end{equation}
equipped with the norm $\|f\|_{\infty, K} = \sup\{|f(z)|: z\in K\}$. The operator $T_K: \mathcal{A}(K) \to \mathcal{A}(K)$ defined by $T_K(f)(z) = zf(z)$ is of type forward shift with 
\begin{equation}\label{eq:303}
\|T_K\| = \sup\{|z|: z\in K\}.
\end{equation}
If the complement of $K$ is connected and if $K$ contains a neighborhood of the origin, it was mentioned without proof by Crownover in \cite{RMC} that the spectrum of the operator $T_K$ coincides with the set $K$. In fact, the result is true in general for any compact set $K$. We could not locate a proof to this result in general setting anywhere in the literature and thus we give a proof here.
\begin{prop}\label{prop:302}
Let $K\subset \C$ be a compact set and let $\mathcal A(K)$ be as in $(\ref{eq:303A})$. Then the spectrum of the operator $T_K:\mathcal{A}(K) \to \mathcal{A}(K)$ defined by $T_K(f)(z)= zf(z)$, is equal to $K$, that is $\sigma(T_K) = K$.
\end{prop}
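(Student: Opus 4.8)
The plan is to recognize $\mathcal{A}(K)$ as a unital commutative Banach algebra under pointwise multiplication and the supremum norm $\|\cdot\|_{\infty,K}$, with unit the constant function $\mathbf 1$, and to view $T_K$ as the operator of multiplication by the coordinate function $\iota(z)=z$, which belongs to $\mathcal{A}(K)$ since it is entire. First I would record that $\mathcal{A}(K)$ is indeed complete: a uniformly Cauchy sequence of continuous functions converges to a continuous function, and by the Weierstrass convergence theorem the limit is holomorphic on $\Int(K)$, so $\mathcal{A}(K)$ is closed in $C(K)$ and hence is a Banach algebra. For any $h\in\mathcal{A}(K)$, write $M_h$ for the multiplication operator $f\mapsto hf$ on $\mathcal{A}(K)$; then $T_K=M_\iota$ and $T_K-\lambda I=M_{\iota-\lambda\mathbf 1}$ for every $\lambda\in\C$.

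The crux is the following reduction: $M_h$ is invertible in $\mathcal B(\mathcal{A}(K))$ if and only if $h$ is invertible in the algebra $\mathcal{A}(K)$. The implication $h^{-1}\in\mathcal{A}(K)\Rightarrow M_{h^{-1}}=M_h^{-1}$ is immediate. For the converse, which is the step that genuinely uses the algebra structure, I would apply the bounded inverse $M_h^{-1}$ to the unit $\mathbf 1$ and set $u:=M_h^{-1}\mathbf 1\in\mathcal{A}(K)$; then $h\,u=M_h u=\mathbf 1$, so $u=h^{-1}$ in $\mathcal{A}(K)$. Consequently $\sigma(T_K)=\{\lambda\in\C:\iota-\lambda\mathbf 1\text{ is not invertible in }\mathcal{A}(K)\}$; that is, the spectrum of $T_K$ equals the spectrum of the coordinate function computed inside the Banach algebra $\mathcal{A}(K)$.

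It then remains to identify this set with $K$ through two inclusions. If $\lambda\notin K$, then $\mathrm{dist}(\lambda,K)>0$ since $K$ is compact, so $(\iota-\lambda\mathbf 1)^{-1}=\frac{1}{z-\lambda}$ is holomorphic on a neighbourhood of $K$, in particular holomorphic on $\Int(K)$ and continuous on $K$; hence $\iota-\lambda\mathbf 1$ is invertible in $\mathcal{A}(K)$ and $\lambda\notin\sigma(T_K)$, giving $\sigma(T_K)\subseteq K$. Conversely, if $\lambda\in K$ and some $g\in\mathcal{A}(K)$ satisfied $(\iota-\lambda\mathbf 1)g=\mathbf 1$, then evaluating at $z=\lambda$ would give $0=(\lambda-\lambda)g(\lambda)=1$, a contradiction; so $\iota-\lambda\mathbf 1$ is not invertible and $\lambda\in\sigma(T_K)$, giving $K\subseteq\sigma(T_K)$. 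Combining the two inclusions yields $\sigma(T_K)=K$.

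I expect the only real subtlety to lie in the converse half of the reduction lemma, where one must pass from invertibility of the operator $M_h$ on the Banach space $\mathcal{A}(K)$ to invertibility of the multiplier $h$ within the algebra; the evaluation trick $u=M_h^{-1}\mathbf 1$ settles this cleanly. Notably, the argument uses neither the connectedness of $\C\setminus K$ nor the assumption that $K$ contains a neighbourhood of the origin, so it establishes the conclusion for an arbitrary compact $K$, exactly as claimed in the statement.
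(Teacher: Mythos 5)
Your proof is correct, and at its core it runs on the same two ingredients as the paper's: for $\lambda\in K$ the obstruction is evaluation at $\lambda$, and for $\lambda\notin K$ the inverse is (up to a harmless scalar) multiplication by $(z-\lambda)^{-1}$, which lies in $\mathcal{A}(K)$ because $\mathrm{dist}(\lambda,K)>0$. Where you differ is the packaging: you first prove a general multiplier lemma --- $M_h$ is invertible in $\mathcal{B}(\mathcal{A}(K))$ if and only if $h$ is invertible in the algebra $\mathcal{A}(K)$, with the nontrivial direction settled by $u=M_h^{-1}\mathbf{1}$ --- and thereby identify $\sigma(T_K)$ with the spectrum of the coordinate function $\iota$ computed inside the Banach algebra $\mathcal{A}(K)$. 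The paper instead argues directly on the operator: for $\lambda\in K$ it shows $(T_K-\lambda I)$ is not surjective, since every function in its range vanishes at $\lambda$ (the same evaluation trick as yours, phrased at the operator level), and for $\lambda\notin K$ it exhibits the explicit bounded inverse $S_\phi/\delta$ with $\phi(z)=\delta/(z-\lambda)$, verified through the identity $(T_K-\lambda I)S_\phi=\delta I=S_\phi(T_K-\lambda I)$. Your route is more modular and proves slightly more --- the equality of the operator spectrum of $M_h$ with the algebra spectrum of $h$ for an arbitrary multiplier $h$, of which the proposition is the case $h=\iota$ --- at the small price of having to record that $\mathcal{A}(K)$ is complete (closed in $C(K)$ by the Weierstrass convergence theorem), which you do; the paper's route is self-contained, avoids any Banach-algebra machinery, and incidentally yields the norm bound $\|S_\phi\|\leq 1$. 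Both arguments, as you correctly note, use neither connectedness of $\C\setminus K$ nor the assumption that $K$ contains a neighbourhood of the origin, so both establish the result for an arbitrary compact $K$.
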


\begin{proof}
Let $\lambda\in K$ be arbitrary. For any $f\in \mathcal{A}(K)$ we have that $(T_K- \lambda I)(f)(\lambda) = \lambda f(\lambda) - \lambda f(\lambda) = 0$ and this shows that if $g \in Ran(T_K - \lambda I)$, then $g(\lambda)=0$. Evidently, not all functions in $\mathcal A(K)$ vanish at $\lambda$ and consequently $(T_K- \lambda I)$ is not surjective. Thus, $\lambda\in \sigma(T_K)$ and hence $K \subseteq \sigma(T_K)$. We now show the other way, that is $\sigma(T_K) \subseteq K$. It suffices to show the contrapositive form, that is $\lambda \notin K$ implies that $\lambda \notin \sigma(T)$. Let $\lambda\notin K$. Since $K$ is compact, there exists $\delta >0$ such that $D_\delta(\lambda) \cap K = \emptyset$. Consider the function $\phi: K \to K $ defined by $\phi(z) = \frac{\delta}{z-\lambda}$. Then $\phi \in \mathcal{A}(K)$ and since $|z-\lambda| \geq \delta$ for all $z\in K$, we have that $\|\phi\|_{\infty,\, K} \leq 1$. The function $S_\phi: \mathcal{A}(K) \to \mathcal{A}(K)$ defined by $S_\phi(f)(z) = \phi(z)f(z)$ is a bounded operator with $\|S_\phi\| = \|\phi \|_{\infty, K} \leq 1$. Also, for any $f\in \mathcal{A}(K)$ and for $z\in K$ we have
\[
(T_K - \lambda I)S_\phi (f)(z) = \delta f(z)= S_\phi (T_K - \lambda I)(f)(z),
\]
which shows that $(T_K - \lambda I)$ is invertible with inverse $S_\phi \slash \delta$. So, $\lambda\notin \sigma(T_K)$. Consequently, we have $\sigma(T_K) \subseteq K$. This completes the proof.
\end{proof}

\begin{lem}\label{lem:new301}
Let $\X$ be a Banach space and $T\in \mathcal{B}(\X)$ with $\|T\| \leq \frac{R}{3}$ for some $R > 0$. Suppose $K = \overline{D}_R \setminus D_\delta(\alpha)$, where $\frac{R}{3}< |\alpha| < R$ and $\delta >0$ be such that $D_\delta(\alpha)=\{ z\in D_R : |z -\alpha| < \delta \} \subset \{z\in {D}_R: \frac{R}{3} < |z| < R\}$. Then von Neumann's inequality holds over $K$ for the function $g(z)=(z- \alpha)^{-m}$ for all $m\in \N$, that is,
$
   \|g(T)\| \leq \|g\|_{\infty,\, K}.
$
\end{lem}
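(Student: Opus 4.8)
The plan is to bound the two sides of the desired inequality separately and then compare them through the geometric position of the excised disk. Since the only pole of $g$ sits at $\alpha$, which lies in $D_\delta(\alpha)$ and hence off $K$, we have $g\in Rat(K)$; moreover $\|T\|\le R/3<|\alpha|$ gives $\sigma(T)\subseteq\overline{D}_{R/3}$, so $\alpha\notin\sigma(T)$, the operator $\alpha I-T$ is invertible, and $g(T)=(T-\alpha I)^{-m}$ is well defined (note also $\sigma(T)\subseteq\overline{D}_{R/3}\subseteq K$, so the spectral containment required of a spectral set is automatic). For the right-hand side I would not compute $\|g\|_{\infty,K}$ exactly but only produce the lower bound I actually need: the point $z_0=\alpha\big(1-\delta/|\alpha|\big)$ satisfies $|z_0|=|\alpha|-\delta<R$ and $|z_0-\alpha|=\delta$, so $z_0\in K$ and therefore $\|g\|_{\infty,K}\ge |g(z_0)|=\delta^{-m}$.

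For the left-hand side I would expand $g$ about the origin as $g(z)=(z-\alpha)^{-m}=\sum_{k=0}^{\infty}a_k z^k$ with $a_k=(-1)^m\alpha^{-(m+k)}\binom{m+k-1}{k}$, a series converging on $|z|<|\alpha|$. Since $\|T\|<|\alpha|$, the corresponding operator series converges in norm and equals $g(T)$, so the triangle inequality together with $\|T\|\le R/3$ gives $\|g(T)\|\le\sum_{k=0}^{\infty}|a_k|(R/3)^k$. This last expression is precisely the Bohr sum of $g$ at radius $R/3$, but Bohr's theorem is \emph{not} available here as a black box, because $g\notin H^\infty(D_R)$ (its pole $\alpha$ lies inside $D_R$); so instead I would sum the binomial series in closed form to obtain $\sum_{k=0}^{\infty}|a_k|(R/3)^k=|\alpha|^{-m}\big(1-\tfrac{R}{3|\alpha|}\big)^{-m}=(|\alpha|-R/3)^{-m}$.

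It then remains to compare $(|\alpha|-R/3)^{-m}$ with the lower bound $\delta^{-m}$, and this is the crux of the argument. From the hypothesis $D_\delta(\alpha)\subset\{z:R/3<|z|<R\}$, every point of $D_\delta(\alpha)$ has modulus exceeding $R/3$; running along the segment from $\alpha$ toward the origin, whose points lie in $D_\delta(\alpha)$ and have moduli approaching $|\alpha|-\delta$, forces $|\alpha|-\delta\ge R/3$, that is $|\alpha|-R/3\ge\delta>0$. Since $t\mapsto t^{-m}$ is decreasing on $(0,\infty)$, this yields $(|\alpha|-R/3)^{-m}\le\delta^{-m}\le\|g\|_{\infty,K}$, and chaining with the estimate of the previous paragraph completes the proof. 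The main obstacle is therefore not analytic but organizational: evaluating the Bohr sum in closed form and extracting the sharp geometric inequality $|\alpha|-\delta\ge R/3$ from the containment of the excised disk inside the annulus; once these two ingredients are in place, the conclusion is immediate.
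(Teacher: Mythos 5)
Your proof is correct and follows essentially the same route as the paper's: a power-series estimate for $g(T)$ at radius $\|T\|\le R/3$ giving $\|g(T)\|\le(|\alpha|-R/3)^{-m}$, compared against $\|g\|_{\infty,K}\ge\delta^{-m}$ via the geometric inequality $|\alpha|-R/3\ge\delta$ extracted from $D_\delta(\alpha)\subset\{z: R/3<|z|<R\}$. The only cosmetic differences are that you expand $(z-\alpha)^{-m}$ directly as a binomial series whereas the paper bounds $\|(T-\alpha I)^{-1}\|$ by the Neumann series and raises it to the $m$-th power, and that you witness the sup-norm lower bound at the inward point $\alpha(1-\delta/|\alpha|)$ while the paper evaluates at $\alpha+\delta$ --- both yield identical constants.
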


\begin{proof}
First we note that $\|g\|_{\infty, \, K} = \frac{1}{\delta^m}$. Indeed, for all $z\in K$, $|z-\alpha| \geq \delta$ and $g(\alpha+ \delta)= \frac{1}{\delta^m}$, along with $(\alpha + \delta)\in K$. Since $\|T\| \leq \frac{R}{3}$, $\sigma(T)\subseteq \overline{D}_{\frac{R}{3}}$. So, $(T-\alpha I_\X)$ is invertible in $\mathcal{B}(\X)$ and its inverse is given by
\[
    (T-\alpha I_\X)^{-1} = \sum_{n=0}^\infty -\frac{T^n}{\alpha^{n+1}}.
\]
Consequently, we have
\begin{align} \label{display:new-01}
   \|g(T)\| = \|(T- \alpha I_\X)^{-m}\| \leq \|(T-\alpha I_\X)^{-1}\|^m \leq \left(\frac{1}{|\alpha|} \sum_{n=0}^\infty \frac{\|T\|^n}{|\alpha|^n} \right)^m & \leq \left(\frac{1}{|\alpha|} \sum_{n=0}^\infty \frac{R^n}{|3\alpha|^n} \right)^m \notag \\
   & = \frac{1}{\left(|\alpha| - \frac{R}{3}\right)^m} \notag \\
   & \leq \frac{1}{\delta^m} = \|g\|_{\infty,\, K},  
\end{align}
where the last inequality follows from the fact that $D_\delta(\alpha) \subset \{z\in {D}_R: \frac{R}{3} < |z| < R\}$.
\end{proof}

Being armed with all necessary results, we are now in a position to present one of the main results of this paper.

\begin{thm}\label{thm:302}
Let $\mathcal{F}_r$ be the set of all Banach space operators with norm less than or equal to $r$. Then the closed disk $\overline{D}_R=\{ z\in \C \, : \, |z|\leq R \}$ is the minimal spectral set for $\mathcal{F}_r$ if and only if $r= \frac{R}{3}$.
\end{thm}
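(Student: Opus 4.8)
The plan is to split the biconditional into the underlying spectral‑set characterization and the minimality clause, and to run everything off the generalized Bohr theorem (Theorem \ref{prop:301}) together with its sharpness. First I would prove the cleaner statement that $\overline{D}_R$ is a spectral set for $\mathcal{F}_r$ if and only if $r\le R/3$. For sufficiency, if $\|T\|\le r\le R/3$ and $f\in Rat(\overline{D}_R)$, then $f$ is holomorphic on $\overline{D}_R$ (its poles lie outside), so writing $f(z)=\sum_n a_nz^n$ gives $\|f(T)\|\le \sum_n|a_n|\,\|T\|^n\le \sum_n|a_n|(R/3)^n\le \|f\|_{\infty,\overline{D}_R}$ by Theorem \ref{prop:301}, while $\sigma(T)\subseteq \overline{D}_{R/3}\subseteq\overline{D}_R$ is automatic. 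For necessity I would use sharpness: if $r>R/3$, take the forward shift $M_z$ on $\ell_1$ rescaled to norm $\rho$ with $R/3<\rho\le r$; since $\rho$ exceeds the Bohr radius $R/3$ of $D_R$, there is an $f$ with $\sum_n|a_n|\rho^n>\|f\|_{\infty,\overline{D}_R}$, and on $\ell_1$ this shift realizes $\|f(T)\|=\sum_n|a_n|\rho^n$, violating von Neumann's inequality. This already yields the "only if'' half of the converse.

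The key extremality to record before attacking minimality is that the forward shift $T_0=(R/3)M_z$ on $\ell_1$ simultaneously maximizes $\|f(T)\|$ over the whole family, because $\|f(T)\|\le \sum_n|a_n|\|T\|^n\le \sum_n|a_n|(R/3)^n=\|f(T_0)\|$ for every $f$ whose Taylor series converges on $\overline{D}_{R/3}$, and $\sigma(T_0)=\overline{D}_{R/3}$. For the minimality clause (the case $r=R/3$) I would take an arbitrary proper compact $\widehat K\subsetneq\overline{D}_R$, choose $\beta\in\overline{D}_R\setminus\widehat K$, and split on $|\beta|$. If $|\beta|\le R/3$, then $\beta\in\sigma(T_0)$ while $\beta\notin\widehat K$, so $\sigma(T_0)\not\subseteq\widehat K$ and $\widehat K$ is not a spectral set for $T_0$. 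If $R/3<|\beta|\le R$, I would invoke the sharpness of the generalized Bohr theorem at $\beta$ to obtain $h$ holomorphic on $\overline{D}_R$ with $\sum_n|b_n|\,|\beta|^n>\|h\|_{\infty,\overline{D}_R}$, and try to manufacture from $h$ a test function $f\in Rat(\widehat K)$, with a pole placed inside the omitted region near $\beta$, that breaks von Neumann's inequality over $\widehat K$ for some operator in $\mathcal{F}_{R/3}$. Here Lemma \ref{lem:new301} plays the role of disposing of the purely polar contributions: it shows the functions $(z-\beta)^{-m}$ can never by themselves violate the inequality over such sets, so that any violation must be driven by the holomorphic, Bohr‑sharp part of $f$, which is exactly where the strict inequality $\sum_n|b_n|\,|\beta|^n>\|h\|_{\infty,\overline{D}_R}$ must be fed in.

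The hard part, and the step I expect to be the main obstacle, is precisely this annulus/boundary case $R/3<|\beta|\le R$. The difficulty is structural: the operator‑norm bound caps $\|f(T)\|$ at $\sum_n|a_n|(R/3)^n$, which Bohr keeps below $\|f\|_{\infty,\overline{D}_R}$, so one cannot simply feed the large quantity $\sum_n|a_n|\,|\beta|^n$ through the functional calculus. The violation must instead come from the fact that $\widehat K$ omits a neighborhood of $\beta$, forcing $\|f\|_{\infty,\widehat K}$ strictly below $\|f(T_0)\|$; reconciling this localization with the Bohr sharpness constant, while keeping the test function's poles off $\widehat K$ and controlling its supremum on the retained part of $\overline{D}_R$, is the delicate calculation. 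This is where the combination of Lemma \ref{lem:new301} and the sharpness statement has to be engineered carefully.

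Finally, for the remaining converse implication I would argue the contrapositive in two pieces. If $r>R/3$, the characterization of the first paragraph shows $\overline{D}_R$ is not even a spectral set for $\mathcal{F}_r$, so it cannot be minimal. If $r<R/3$, then $3r<R$ and, applying the characterization with $R$ replaced by $3r$, the closed disk $\overline{D}_{3r}$ is a spectral set for $\mathcal{F}_r$; since $\overline{D}_{3r}\subsetneq\overline{D}_R$ is a proper compact subset that remains a spectral set for the whole family, $\overline{D}_R$ is again not minimal. Hence minimality forces $r=R/3$, completing the equivalence.
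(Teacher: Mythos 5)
Your opening and closing paragraphs are correct and essentially coincide with the paper's necessity argument: the characterization that $\overline{D}_R$ is a spectral set for $\mathcal{F}_r$ exactly when $r\le R/3$ (Bohr's inequality from Theorem \ref{prop:301} for sufficiency; sharpness realized on the weighted forward shift on $\ell_1(\C)$, where $\|f(\rho M_z)\|=\sum_n|a_n|\rho^n$, for necessity), and the rescaling step showing $r<R/3$ would make $\overline{D}_{3r}\subsetneq\overline{D}_R$ a spectral set, are the same arguments the paper gives. Your case $|\beta|\le R/3$ of minimality is also fine: the paper uses scalar operators $cI_\X$, you use $\sigma((R/3)M_z)=\overline{D}_{R/3}$, and both yield the spectrum-containment obstruction.

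The genuine gap is the case $R/3<|\beta|\le R$, which is the heart of the theorem, and you never close it: you state the obstacle and defer the ``delicate calculation.'' Moreover, the plan you sketch points in the wrong direction. As you yourself observe, $\|h(T)\|\le\sum_n|b_n|(R/3)^n\le\|h\|_{\infty,\overline{D}_R}$ for every $T\in\mathcal{F}_{R/3}$, so the Bohr-sharp quantity $\sum_n|b_n||\beta|^n$ can never be realized through the functional calculus; and Lemma \ref{lem:new301} shows the pure poles $(z-\alpha)^{-m}$ also satisfy von Neumann's inequality on such sets, so a violation cannot be decomposed into a ``holomorphic Bohr-sharp part'' plus a ``polar part'' -- it must come from interaction between the two, which is exactly what your sketch does not engineer. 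The paper's actual mechanism is Bohr-free in this half: it first reduces an arbitrary proper compact spectral subset $K$ to $\widehat{K}=\overline{D}_R\setminus D_\delta(\alpha)$ (using $\overline{D}_{R/3}\subseteq K$ via scalars, and that compact supersets of spectral sets are again spectral), then applies the alleged spectral property of $\widehat{K}$ to the \emph{single} operator $T_{\widetilde{K}}$, multiplication by $z$ on $\mathcal{A}(\overline{D}_{R/3})$, with $\|T_{\widetilde{K}}\|=R/3$ and, crucially, $\sigma(T_{\widetilde{K}})=\overline{D}_{R/3}$ (Proposition \ref{prop:302}). The test functions are the two-term M\"obius family $g_c(z)=\frac{z/R-c}{1-cz/R}+\frac{\delta/(z-\alpha)-c}{1-c\delta/(z-\alpha)}\in Rat(\widehat{K})$: by the maximum modulus principle each summand is unimodular on exactly one of the two boundary components of $\widehat{K}$ and of modulus less than $1$ on the other, keeping $\|g_c\|_{\infty,\widehat{K}}\le M<2$, while evaluating $g_c(T_{\widetilde{K}})\mathbf{1}$ at $z=0$ and letting $c\to1$ drives $\|g_c(T_{\widetilde{K}})\|$ up to $2$ -- a contradiction. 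In that computation Lemma \ref{lem:new301} serves only to give $\|(T_{\widetilde{K}}-\alpha I)^{-n}\|\le\delta^{-n}$, i.e., convergence of the Neumann series, not to dispose of polar contributions. Your instinct that the uniform estimate played against the limit $c\to1$ is the delicate crux is right, but as written your proposal proves only the spectral-set characterization and the necessity direction; the essential claim that no proper compact subset of $\overline{D}_R$ is spectral for $\mathcal{F}_{R/3}$ remains unproved.
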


\begin{proof}
We prove the forward implication first. Assume that $\overline{D}_R$ is the  minimal spectral set of $\mathcal{F}_r$. We start by showing that $0\leq r \leq {R}\slash {3}$. If possible let $r> {R}\slash {3}$. Since the Bohr radius of $D_R$ with respect to $H^\infty(D_R)$ is ${R}\slash {3}$ (see Theorem \ref{prop:301}), there is $f\in H^\infty(D_R)$ with $f(z) = \sum_{n=0}^\infty a_nz^n$ such that
\begin{equation} \label{eqn:new-021}
\sum_{n=0}^\infty |a_n|r^n > \|f\|_{\infty,\, \overline{D}_R}.
\end{equation}
Now consider the operator $rM_z$, where $M_z$ is the forward shift operator on $\ell_1(\C)$ that maps an element $(a_0,a_1, \dots) \in \ell_1(\C)$ to $(0, a_0,a_1,\dots)$. The operator $f(rM_z) = \sum_{n=0}^\infty a_nr^n M_z^n$ can be represented by the following matrix: 
\[
    f(rM_z)= \begin{bmatrix}
    a_0 & 0 & 0 & 0&  \cdots \\
    ra_1 & a_0 & 0 & 0 & \cdots \\
    r^2 a_2 & ra_1 & a_0 & 0 & \cdots \\
    \vdots & \vdots & \vdots & \vdots & \ddots \\
    \end{bmatrix}.
\]
Thus, $\|f(rM_z)\| = \sum_{n=0}^\infty |a_n|r^n$. By hypothesis, $\overline{D}_R$ is a spectral set of $rM_z$ as $\|rM_z\|=r$. So, $\|f(rM_z)\| = \sum_{n=0}^\infty |a_n|r^n \leq \|f\|_{\infty,\, \overline{D}_R}$, which contradicts (\ref{eqn:new-021}). Consequently, we have $0\leq r \leq {R}\slash {3}$. Next, we show that $r= {R}\slash {3}$. Let us assume the contrary, that is, $r< {R}\slash {3}$. Let $S=3r$. Then $S < R$. By Theorem \ref{prop:301}, the Bohr radius of $D_S$ with respect to $H^\infty(D_S)$ is ${S}\slash {3}$ which is equal to $r$. Let $f\in H^\infty(D_S)$ be arbitrary with $f(z) = \sum_{n=0}^\infty b_nz^n$, and let $T$ be any operator in $\mathcal{F}_r$. Then 
\[
  \|f(T)\| = \left\|\sum_{n=0}^\infty b_n T^n \right\| \leq \sum_{n=0}^\infty |b_n|~ \|T\|^n \leq \sum_{n=0}^\infty |b_n|r^n \leq \|f\|_{\infty,\, \overline{D}_S},
\]
where the second last inequality holds as $\|T\| \leq r=S \slash{3}$ and the last inequality follows from the fact that the Bohr radius of $D_S$ with respect to $H^{\infty}(D_S)$ is equal to $S\slash 3$. This shows that $\overline{D}_S$ is a spectral set of $T$. Since $T \in \mathcal F_r$ was arbitrary, it follows that $\overline{D}_S$ is a spectral set of $\mathcal{F}_r$. This contradicts the fact that $\overline{D}_R$ is the minimal spectral set for $\mathcal F_r$ as $S<R$. Hence, $r= R \slash 3$.

\smallskip

Now, we prove the sufficiency. Assume $r= \frac{R}{3}$. 
We will prove only the minimality condition in the statement, since the proof of the fact that $\overline{D}_R$ is a spectral set of $\mathcal{F}_r$ is similar to the proof of $r=\frac{R}{3}$ in the necessary part. Suppose $K$ is a spectral set of $\mathcal{F}_r$, where $K\subset \overline{D}_R$ is a proper compact subset. Note that $\overline{D}_{\frac{R}{3}} \subseteq K$. Indeed, $cI_\X\in \mathcal{F}_r$ for all $c\in \overline{D}_r$, and for all Banach space $\X$. So, $K$ is a  spectral set of $cI_\X$, which implies that $\sigma(cI_\X) =\{c\}\subseteq K$. If $D_R \subseteq K \subseteq \overline{D}_R$, then $K = \overline{D}_R$. Therefore, there exists $\alpha \in D_R \setminus K$ and $\delta > 0$ such that $D_\delta(\alpha)=\{z\in D_R: |z-\alpha|< \delta \} \subset D_R$ and $D_\delta(\alpha) \cap K = \emptyset$. Consequently, we have $D_\delta(\alpha) \subset \{ z \in D_R : \frac{R}{3} < |z| < R \}$. Let $\widehat{K} = \overline{D}_R \setminus D_\delta(\alpha)$. Then $\widehat{K}$ is a compact subset of $\overline{D}_R$, and $K \subseteq \widehat{K}$. Since $K$ is a spectral set of $\mathcal{F}_r$, it follows that $\widehat{K}$ is also a spectral set of $\mathcal{F}_r$.

Now we consider the Banach space $\mathcal{A}(\widetilde{K})$ as in (\ref{eq:303A}), where $\widetilde{K}= \overline{D}_{\frac{R}{3}}$ and also consider the bounded operator $T_{\widetilde{K}}: \mathcal{A}(\widetilde{K}) \to \mathcal{A}(\widetilde{K})$ defined by $T_{\widetilde{K}}(f)(z) = zf(z)$ for all $f \in \mathcal{A}(\widetilde{K})$ and $z \in \widetilde{K}$. It follows from Equation-(\ref{eq:303}) and Proposition \ref{prop:302} that $\|T_{\widetilde{K}}\| = \frac{R}{3}$ and that $\sigma(T_{\widetilde{K}})=\widetilde{K}$. So $(T_{\widetilde{K}}- \alpha I)$ is an invertible operator on $\mathcal{A}(\widetilde{K})$, where $I$ is the identity operator on $\mathcal{A}(\widetilde{K})$. For any $(c\in (0,1))$, let us consider the function $g_c: \widehat{K} \to \C$ defined by 
\[
 g_c(z) = \frac{\frac{z}{R} - c}{1- \frac{cz}{R}} + \frac{\frac{\delta}{z- \alpha} - c }{1- \frac{c \delta}{z- \alpha}}, \quad z\in \widehat{K}.
\]
It is evident that $1- \frac{cz}{R} =0$  if and only if $|z| > R$ and $1- \frac{c \delta}{z- \alpha} =0$ if and only if $|z-\alpha| < \delta$. So, we have that $g_c\in Rat(\widehat{K})$ for all $c\in (0,1)$. By maximum modulus principle, each $g_c$ attains its norm on $ A \cup B$, where $A= \{z\in \overline{D}_R: |z|= R\}$, and $B=\{z\in \overline{D}_R: |z-\alpha| = \delta\}$. For each $z\in A$, we have that $|z-\alpha| > \delta$. Therefore, we have
\[
   \left|\frac{\frac{z}{R} - c}{1- \frac{cz}{R}}\right| = 1 \quad \text{ and } \quad \left|\frac{\frac{\delta}{z- \alpha} - c}{1- \frac{c \delta}{z- \alpha}}\right| < 1, \quad z\in A.
\]
Similarly, for all $z\in B$, we have $|z| < R$ which gives
\[
\left|\frac{\frac{z}{R} - c}{1- \frac{cz}{R}}\right| < 1 \quad \text{ and } \quad \left|\frac{\frac{\delta}{z- \alpha} - c }{1- \frac{c \delta}{z- \alpha}}\right| = 1, \quad z\in B.
\]
Consequently, we have $\|g_c\|_{\infty, \widehat{K}} \leq M < 2$ for all $0 < c < 1$. Since $T_{\widetilde{K}}\in \mathcal{F}_{r}$ (as $\|T_{\widetilde{K}}\|=\frac{R}{3}$) and $\widehat{K}$ is a spectral set of $T_{\widetilde{K}}$, we have
\begin{equation}\label{eq:new202}
   \|g_c(T_{\widetilde{K}})\| \leq \|g_c\|_{\infty, \widehat{K}} \leq M < 2, \quad 0 < c < 1.
\end{equation}
For all $c$ satisfying $0< c < 1$, we have $\|\frac{cT_{\widetilde{K}}}{R}\| \leq \frac{c}{3} < 1$. Also, $(T_{\widetilde{K}}- \alpha I)$ is invertible since we have $\|T_{\widetilde{K}}\| \leq \frac{R}{3} < |\alpha|$. Needless to mention that the inequality (\ref{display:new-01}) in the proof of Lemma \ref{lem:new301} implies that $\|c\delta (T_{\widetilde{K}} - \alpha I)^{-1}\|\leq c < 1$ when $m=1$. Therefore, we have
\begin{align}\label{eq:new203}
g_c(T_{\widetilde{K}}) & = \left(\frac{T_{\widetilde{K}}}{R} - c I \right) \left(I - \frac{cT_{\widetilde{K}}}{R}\right)^{-1} + \left(\delta(T_{\widetilde{K}}- \alpha I)^{-1} - cI\right)\left(I - c\delta (T_{\widetilde{K}} - \alpha I)^{-1}\right)^{-1} \nonumber \\
& = \left(\frac{T_{\widetilde{K}}}{R} - c I \right) \left(\sum_{n=0}^\infty \frac{c^n T_{\widetilde{K}}^n}{R^n}\right) + \left(\delta(T_{\widetilde{K}}- \alpha I)^{-1} - cI\right)\left(\sum_{n=0}^\infty c^n \delta^n (T_{\widetilde{K}}- \alpha I)^{-n}\right) \nonumber \\
& = \left[-cI + \sum_{n=1}^\infty \left(\frac{c^{n-1}}{R^n} - \frac{c^{n+1}}{R^n}\right)T_{\widetilde{K}}^n \right] + \left[ -cI + \sum_{n=1}^\infty \left(\delta^n c^{n-1} - \delta^n c^{n+1}\right)\left(T_{\widetilde{K}}- \alpha I\right)^{-n} \right] \nonumber \\
& = -2cI + (1-c^2)\sum_{n=1}^\infty \frac{c^{n-1}}{R^n}T_{\widetilde{K}}^n + (1-c^2)\sum_{n=1}^\infty \delta^n c^{n-1}\left(T_{\widetilde{K}}- \alpha I\right)^{-n},
\end{align}
where the convergence of the second series in \eqref{eq:new203} follows from the fact that $\|(T_{\widetilde{K}}-\alpha  I)^{-n}\| \leq \frac{1}{\delta^n}$ (again see (\ref{display:new-01}) in the proof of Lemma \ref{lem:new301}), for all $n\in \N$. It follows from the definition of $T_{\widetilde{K}}$ that 
\[
(T_{\widetilde{K}}-\alpha I)^{-n}(\mathbf{1})(z) = (z-\alpha)^{-n}, \quad z\in \widetilde{K}=\overline{D}_{\frac{R}{3}},
\]
where $\mathbf{1}$ is the constant function that takes the value $1$. Thus, we have from \eqref{eq:new203} that
\[
g_c(T_{\widetilde{K}})(\mathbf{1})(z) = -2c + (1-c^2)\sum_{n=1}^\infty \frac{c^{n-1}}{R^n}z^n + (1-c^2) \sum_{n=1}^\infty \delta^n c^{n-1}(z-\alpha)^{-n}, \quad z\in \widetilde{K}=\overline{D}_{\frac{R}{3}}.
\]

Consequently, for all $0 < c < 1$, we have
\begin{align}\label{eq:new204}
 \|g_c(T_{\widetilde{K}})\| \geq \|g_c(T_{\widetilde{K}})(\mathbf{1})\|_{\infty, \overline{D}_{\frac{R}{3}}} \geq \left| g_c(T_{\widetilde{K}})(\mathbf{1})(0)\right| = \left|-2c + (1-c^2) \sum_{n=1}^\infty (-1)^n\frac{\delta^n c^{n-1}}{\alpha^n} \right|
\end{align}
Since, $D_\delta(\alpha)\cap K = \emptyset$, we have $\delta \leq |\alpha| - \frac{R}{3} < |\alpha|$. Thus, it follows from \eqref{eq:new204} and \eqref{eq:new202} that
\begin{align}\label{eq:new205}
   M\geq \|g_c(T_{\widetilde{K}})\| \geq \left|2c + \frac{(1-c^2)\delta}{\alpha} \frac{1}{1+ \frac{\delta c}{\alpha}}\right| = \left| 2c + \frac{(1-c^2)\delta}{\alpha + \delta c}\right|, \quad 0 < c < 1.
\end{align}
Now, taking limit $c \to 1$ in \eqref{eq:new205}, we have $M \geq 2$ which contradicts \eqref{eq:new202}. Consequently, $\widehat{K}$ and hence $K$ is not a spectral set for $\mathcal{F}_r$. Therefore, no proper compact subset of $\overline{D}_{R}$ is a spectral set of $\mathcal{F}_r$. Hence, $\overline{D}_R$ is the minimal spectral set for $\mathcal{F}_r$, where $r= \frac{R}{3}$ and this completes the proof.
\end{proof}

The following is an obvious corollary of the preceding theorem.

\begin{cor} \label{cor:new-021}
Let $\mathcal F_r$ be the set of all Banach space operators with norm less than or equal to $r$. Then the closed unit disk $\overline{\D}$ is the minimal spectral set for $\mathcal F_r$ if and only if $r = \frac{1}{3}.$
\end{cor}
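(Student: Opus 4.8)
The plan is to recognize that this corollary is nothing more than the special case $R=1$ of Theorem \ref{thm:302}. The first step is to record the elementary identification $\overline{\D} = \overline{D}_1$: by the notational conventions fixed in Section \ref{Sec:01}, $D_R$ denotes the open disk of radius $R$ centred at the origin, so the closed unit disk $\overline{\D}$ is precisely the closed disk $\overline{D}_R$ with $R=1$. Since the family $\mathcal F_r$ appearing in the corollary is defined exactly as in Theorem \ref{thm:302} — the collection of all Banach space operators of norm at most $r$ — nothing else needs to be reinterpreted.

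Having made this identification, I would simply invoke Theorem \ref{thm:302} with the parameter $R$ set equal to $1$. The theorem asserts that $\overline{D}_R$ is the minimal spectral set for $\mathcal F_r$ if and only if $r = R/3$; specializing to $R=1$ yields immediately that $\overline{D}_1 = \overline{\D}$ is the minimal spectral set for $\mathcal F_r$ if and only if $r = 1/3$. This delivers both implications of the stated equivalence in one stroke, so no separate argument for necessity and sufficiency is required.

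There is, accordingly, no genuine obstacle to overcome here: the entire analytic content — the use of Bohr's theorem via Theorem \ref{prop:301}, the spectral computation of Proposition \ref{prop:302}, and the norm estimates of Lemma \ref{lem:new301} — is already absorbed into the proof of Theorem \ref{thm:302}. The corollary is purely a matter of substitution, which is why it is described above as obvious. If anything merits a word of care, it is only confirming that the definition of ``minimal spectral set for a family'' in Definition \ref{Definition of Dilation of contractions on Banach space}'s preceding definition is applied to the same family $\mathcal F_r$ and the same ambient class of Banach space operators in both statements, which it manifestly is.
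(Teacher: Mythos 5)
Your proposal is correct and matches the paper exactly: the paper states this corollary as ``obvious'' from Theorem \ref{thm:302}, i.e.\ precisely the specialization $R=1$, $r=R/3=1/3$ that you carry out. The only cosmetic slip is your reference to ``Definition \ref{Definition of Dilation of contractions on Banach space}'s preceding definition'' for minimal spectral set --- that notion is given in the first definition of Section \ref{Sec:01} --- but this does not affect the argument.
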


Theorem \ref{thm:302} sets a connection between these two important classical concepts-- the Bohr radius of a disk and spectral set for a family of Banach space operators with certain bound on their norms. We conclude this Section by specifying this connection explicitly in the following theorem.
\begin{thm}\label{thm:303}
Let $\mathcal F_r$ be the set of all Banach space contractions having norms less than or equal to $r$ and let $D_R=\{ z\in \C \,:\, |z|<R \}$. Then the following are equivalent:

\begin{enumerate}
\item[(i)]
the Bohr radius of $D_R$ with respect to the algebra $H^\infty(D_R)$ is $\frac{R}{3}$;

\smallskip

\item[(ii)] $\sup\left\{r>0:\mbox{$\overline{D}_R$ is a spectral set of $\mathcal{F}_r$}\right\} = \frac{R}{3}$;

\smallskip

\item[(iii)] $\inf\left\{r>0:\mbox{$\overline{D}_R$ is not a spectral set of $\mathcal{F}_r$}\right\} = \frac{R}{3}$.
\end{enumerate}
\end{thm}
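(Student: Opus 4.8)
The plan is to prove the single structural identity
\[
\sup S \;=\; \inf S^{c} \;=\; \bigl(\text{the Bohr radius of } D_R\bigr),
\]
where $S=\{r>0:\overline{D}_R \text{ is a spectral set of } \mathcal F_r\}$ and $S^{c}=(0,\infty)\setminus S$, while treating the Bohr radius as an \emph{a priori} unknown quantity $\rho$. Once this common value is isolated, each of (i), (ii), (iii) reads simply as ``the number $\rho=\sup S=\inf S^{c}$ equals $R/3$'', so the three become pairwise equivalent by substitution. The conceptual point I want to respect is that the derivation of $\sup S=\rho$ must not secretly invoke Theorem \ref{prop:301} to pin $\rho=R/3$; otherwise (i)$\Leftrightarrow$(ii) would degenerate into a comparison of two independently known truths rather than a genuine two-way implication.

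First I would record the elementary monotonicity of $S$: if $r\in S$ and $0<r'\le r$, then $\mathcal F_{r'}\subseteq \mathcal F_r$ forces $r'\in S$, so $S$ is an initial interval and everything reduces to locating its right endpoint. Next I would show $(0,\rho)\subseteq S$. Fix $r<\rho$ and $T$ with $\|T\|\le r$. Then $\sigma(T)\subseteq \overline{D}_{\|T\|}\subseteq \overline{D}_R$ (using $\rho<R$), and for $f=\sum_n a_nz^n\in Rat(\overline{D}_R)$ the Taylor series converges in a neighbourhood of $\overline{D}_R$, so the very definition of the Bohr radius yields $\|f(T)\|\le \sum_n |a_n|\,\|T\|^n\le \sum_n|a_n|r^n\le \|f\|_{\infty,\overline{D}_R}$. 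Hence $\overline{D}_R$ is a spectral set of every member of $\mathcal F_r$, i.e. $r\in S$. This is exactly the estimate carried out in the sufficiency half of Theorem \ref{thm:302}, so I would cite it rather than recompute.

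Then I would show $S\cap(\rho,\infty)=\emptyset$. Fix $r>\rho$ and take $T=rM_z$ on $\ell_1(\C)$, which lies in $\mathcal F_r$ since $\|M_z\|=1$ and has $\sigma(T)=\overline{D}_r$. If $r>R$ then $\overline{D}_r\not\subseteq \overline{D}_R$ and the spectrum condition already fails; if $\rho<r\le R$, then $r$ exceeds the Bohr radius, so there is $f=\sum_n a_nz^n\in Rat(\overline{D}_R)$ with $\sum_n|a_n|r^n>\|f\|_{\infty,\overline{D}_R}$, and the lower-triangular Toeplitz shape of $f(rM_z)$ on $\ell_1(\C)$ gives $\|f(rM_z)\|=\sum_n|a_n|r^n>\|f\|_{\infty,\overline{D}_R}$, violating von Neumann's inequality. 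Either way $\overline{D}_R$ fails to be a spectral set of some operator in $\mathcal F_r$, so $r\notin S$; this is precisely the argument used in the first step of the necessity half of Theorem \ref{thm:302}, which I would again cite. Combining the two inclusions gives $(0,\rho)\subseteq S\subseteq(0,\rho]$, whence $\sup S=\rho$, and since $S^{c}$ is then either $[\rho,\infty)$ or $(\rho,\infty)$, also $\inf S^{c}=\rho$.

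Finally I would close the loop: by Theorem \ref{prop:301} the Bohr radius $\rho$ equals $R/3$, so each of ``$\rho=R/3$'' (i), ``$\sup S=R/3$'' (ii) and ``$\inf S^{c}=R/3$'' (iii) asserts that the \emph{same} number $\rho=\sup S=\inf S^{c}$ equals $R/3$, giving the desired equivalence. I expect the main obstacle to be twofold. The operator-theoretic heart is the failure direction $S\cap(\rho,\infty)=\emptyset$, which relies on the sharpness of the Bohr constant together with the explicit shift $rM_z$; fortunately this is imported wholesale from Theorem \ref{thm:302}. The subtler point is the bookkeeping: I must justify $\rho<R$ so that the containment $\overline{D}_{\|T\|}\subseteq\overline{D}_R$ is available throughout — this holds because Bohr's inequality already fails on the boundary circle $|z|=R$, a scaled single Blaschke factor witnessing $\sum_n|a_n|R^n>\|f\|_{\infty,\overline{D}_R}$ — and I must keep the proof of $\sup S=\rho$ free of the value $R/3$, so that the equivalence is substantive in both directions.
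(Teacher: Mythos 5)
Your proof is correct, and it runs on the same two computational engines as the paper's proof --- the Bohr-inequality estimate $\|f(T)\|\le\sum_n|a_n|\,\|T\|^n\le\sum_n|a_n|r^n\le\|f\|_{\infty,\overline{D}_R}$ below the radius, and the shift $rM_z$ on $\ell_1(\C)$ with $\|f(rM_z)\|=\sum_n|a_n|r^n$ above it, both imported from Theorem \ref{thm:302} --- but your decomposition is genuinely different. The paper proves $(i)\Leftrightarrow(ii)$ and $(i)\Leftrightarrow(iii)$ via four separate contradiction arguments, each of which picks an intermediate $t$ and re-runs one of these two estimates; you instead establish once, unconditionally, the structural identity $\sup S=\inf S^{c}=\rho$ with $\rho$ the \emph{a priori unknown} Bohr radius of $D_R$, after which each of (i), (ii), (iii) is literally the single assertion ``$\rho=R/3$'' and the equivalence is immediate by substitution. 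This buys a shorter and more transparent argument: the interval structure $(0,\rho)\subseteq S\subseteq(0,\rho]$, coming from the monotonicity $\mathcal F_{r'}\subseteq\mathcal F_r$, replaces the paper's repeated choices of $t\in(\sup S,R/3)$, $t\in(R/3,\alpha)$, etc., and it correctly makes the two-way implications substantive rather than a comparison of two separately known facts (your closing invocation of Theorem \ref{prop:301} is not needed for the equivalence --- it upgrades the conclusion to the truth of all three statements --- but it is harmless). You are also more careful than the paper on two points: you dispose of $r>R$ via the spectrum condition $\sigma(rM_z)=\overline{D}_r\not\subseteq\overline{D}_R$, which the paper leaves implicit, and you justify $\rho<R$ independently via a scaled Blaschke factor. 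One small caveat: sharpness of the Bohr radius produces a witness $f\in H^\infty(D_R)$, not a priori one in $Rat(\overline{D}_R)$ as you assert; either note (as the paper tacitly does when it applies the spectral-set inequality to $H^\infty$ functions) that the inequality extends to all of $H^\infty(D_R)$ by power series since $\|rM_z\|=r<R$, or replace the witness by a Fej\'er mean or a dilate $f(sz)$ with $s<1$, which is uniformly approximable by polynomials on $\overline{D}_R$ and, by monotone convergence in $s$, still violates $\sum_n|a_n|r^n\le\|f\|_{\infty,\overline{D}_R}$. With that one-line repair your argument is complete and matches the paper's level of rigor.
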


\begin{proof}
We prove $(i)\Leftrightarrow (ii)$ and $(i)\Leftrightarrow (iii)$.

\smallskip

\noindent $(i)\Rightarrow (ii)$. Denote by $S$ the following set: $S= \left\{r>0:\mbox{$\overline{D}_R$ is a spectral set of $\mathcal{F}_r$}\right\}$. It follows from the proof of the necessity part of Theorem \ref{thm:302} that ${R}\slash {3}$ is an upper bound of $S$. So, $\sup( S)$ exists in $\R$ and $\sup(S) \leq {R}\slash {3}$. If possible let, $\sup(S) < {R} \slash {3}$. Choose $t \in (\sup(S) ,{R}\slash {3})$. Then for any $T\in \mathcal{F}_t$ and for any $f\in H^\infty(D_R)$, we have 
\begin{equation}\label{eq:304}
  \|f(T)\| \leq \sum_{n=0}^\infty \frac{|f^{(n)}(0)|}{n!} \|T\|^n \leq \sum_{n=0}^\infty |\frac{|f^{(n)}(0)|}{n!}| t^n \leq \|f\|_{\infty, \overline{D}_R}. \quad \left[~ \text{since }\, t< \frac{R}{3}~\right]
\end{equation}
This shows that $\overline{D}_R$ is a spectral set of $\mathcal{F}_t$, which contradicts the choice of $t>\sup(S)$. Consequently, we have $\sup(S) = {R}\slash {3}$.

\smallskip

\noindent $(ii) \Rightarrow (i)$. Let $f\in H^\infty(D_R)$ and $t\leq \frac{R}{3} = \sup(S)$, where $S$ is as in the previous part, i.e., $(i)\Rightarrow (ii)$. Consider the operator $T= tM_z\in \mathcal{F}_t$, where $M_z$ is the forward shift on $\ell_1(\C)$. Then
\[
  \sum_{n=0}^\infty \frac{|f^{(n)}(0)|}{n!} t^n = \|f(tM_z)\| \leq \|f\|_{\infty, \overline{D}_R},
\]
which shows that the Bohr radius of $D_R$ with respect to $H^\infty (D_R)$ is  greater than or equal to ${R}\slash {3}$. If the Bohr radius of $D_R$ is equal to $\alpha$ with $\alpha > {R}\slash {3}$, then we can choose $t$ in the interval $({R}\slash {3},\alpha)$. Following the same argument as in (\ref{eq:304}), we have that $\overline{D}_R$ is a spectral set of $\mathcal{F}_t$, which contradicts the hypothesis that $\sup(S) = {R}\slash {3}$ and also the choice of $t > \sup(S)$.

\smallskip

\noindent $(i)\Rightarrow (iii)$. Denote by $A$ the following set: $A= \left\{r>0:\mbox{$\overline{D}_R$ is not a spectral set of $\mathcal{F}_r$}\right\}$. The proof of $(i)\Rightarrow (ii)$ shows that if $t\leq {R}\slash {3}$, then $\overline{D}_R$ is a spectral set of $\mathcal{F}_t$. So, the contrapositive form tell us that ${R}\slash {3}$ is a lower bound for $A$. Thus, $\inf(A)$ exists in $\R$ and $\inf(A)\geq {R}\slash {3}$. If possible, assume that $\inf(A) > {R}\slash {3}$. Choose $t\in ({R} \slash {3}, \inf(A))$. Since the Bohr radius of $D_R$ with respect to $H^\infty(D_R)$ is ${R} \slash {3}$, there exists $f\in H^\infty(D_R)$ such that
\[
  \sum_{n=0}^\infty \frac{|f^{(n)}(0)|}{n!} t^n = \|f(tM_z)\| > \|f\|_{\infty, \overline{D}_R}.
\]
This shows that $\overline{D}_R$ is not a spectral set of $\mathcal{F}_t$, which contradicts the choice of $t$, i.e., $t < \inf(A)$. Hence, we have that $\inf(A) ={R}\slash {3}$.

\smallskip

\noindent $(iii) \Rightarrow(i)$. First, we show that the Bohr radius of $D_R$ with respect to $H^\infty(D_R)$ is less than or equal to ${R} \slash {3}$. Let $t > {R} \slash {3} = \inf(A)$, where $A$ is as in $(i) \Rightarrow (iii)$. The hypothesis shows the existence of an element $f\in H^\infty(D_R)$ satisfying 
\[
\sum_{n=0}^\infty \frac{|f^{(n)}|(0)}{n!} t^n > \|f\|_{\infty, \overline{D}_R}.
\]
Indeed, if the reverse inequality holds for all $f\in H^\infty(D_R)$, then we see that $\overline{D}_R$ is a spectral set for $\mathcal{F}_s$ for all $s\leq t$. Again, following the contrapositive form we have $\inf(A) \geq t$, which contradicts the choice of $t$ as $t> \inf(A)$. Now, let the Bohr radius of $D_R$ with respect to $H^\infty(D_R)$ be $\alpha$. We show that $\alpha = {R}\slash {3}$. If possible let $\alpha < {R}\slash {3}$. Choose $t$ form the interval $(\alpha,{R}\slash{3})$. The same argument as in $(i) \Rightarrow (iii)$ shows that $\overline{D}_R$ is not a spectral set for $\mathcal{F}_t$. This contradicts the hypothesis that $\inf(A) = {R}\slash{3}$ and the choice of $t$, i.e., $t < \inf(A)$. Consequently, the Bohr radius of $D_R$ with respect to $H^\infty(D_R)$ is ${R}\slash{3}$. The proof is now complete.
\end{proof}

\smallskip

\section{Spectral set and complete spectral set for Banach space operators vs. Hilbert space}\label{Sec:02}

\vspace{0.2cm}

\noindent  For a Banach space operator $T$, the results of Section \ref{Sec:03} show a clear conflict between the two statements: $\|T\|\leq 1$ and $\overline{\D}$ is a spectral set for $T$. Going deep into investigating the relation between spectral set and norm of a Banach space operator, Foias \cite{CFI} found the following surprising result.

\begin{thm}[Foias, \cite{CFI}] \label{thm:Foias-New}
A complex Banach space $\X$ is a Hilbert space if and only if the closed unit disk $\overline{\D}$ is a spectral set for all $T\in \mathcal{B}(\X)$ with $\|T\| =1$.
\end{thm}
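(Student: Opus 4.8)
The plan is to prove the two implications separately, with the forward direction being routine and the converse carrying all the difficulty.

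For the direction that a Hilbert space $\X$ forces $\overline{\D}$ to be a spectral set for every $T\in\mathcal{B}(\X)$ with $\|T\|=1$, I would simply invoke the classical von Neumann inequality: a norm-one operator is a contraction, its spectrum lies in $\overline{\D}$, and by Theorem \ref{thm:401} (the equivalence of (i) and (iii)) we get $\|f(T)\|\le\|f\|_{\infty,\overline{\D}}$ for all $f\in Rat(\overline{\D})$. Nothing further is needed here.

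For the converse --- $\overline{\D}$ a spectral set for all norm-one operators implies $\X$ is Hilbert --- my strategy would be to establish the parallelogram law and appeal to the Jordan--von Neumann theorem. Since an inner product is determined two dimensions at a time, it suffices to show that the norm restricted to each two-dimensional subspace is Euclidean, i.e. that $\|x+y\|^2+\|x-y\|^2=2\|x\|^2+2\|y\|^2$ for all $x,y\in\X$. The engine for producing constraints on the norm from the spectral-set hypothesis is a supply of \emph{norm-one test operators built by hand}. Given linearly independent $u,v$ with $\|v\|=1$ and $\mathrm{dist}(u,\mathbb{C}v)=1$, Hahn--Banach furnishes $\phi\in\X^{*}$ with $\|\phi\|=1$, $\phi(u)=1$, $\phi(v)=0$; the rank-one operator $T=\phi\otimes v$ (that is, $Tw=\phi(w)v$) then satisfies $\|T\|=1$ and $T^{2}=0$, so $f(T)=f(0)I+f'(0)T$ for every $f\in Rat(\overline{\D})$. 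Applying the spectral-set inequality to $f(T)u=f(0)u+f'(0)v$ and minimizing $\|f\|_{\infty,\overline{\D}}$ over all $f$ with prescribed $f(0)=a$, $f'(0)=b$ (a Schur--Carath\'eodory interpolation whose value is $\tfrac{1}{2}(|b|+\sqrt{|b|^{2}+4|a|^{2}})$) yields the family of constraints
\[
\|au+bv\|\le \|u\|\,\frac{|b|+\sqrt{|b|^{2}+4|a|^{2}}}{2},\qquad a,b\in\C .
\]
I would run this construction over all admissible configurations of vectors (and over the rotated operators $e^{i\theta}T$), collecting the resulting inequalities and feeding in the M\"obius factors $b_\lambda(z)=(z-\lambda)(1-\bar\lambda z)^{-1}$, for which $\|b_\lambda\|_{\infty,\overline{\D}}=1$ gives the sharp bounds $\|b_\lambda(T)\|\le 1$.

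The main obstacle is that von Neumann's inequality only ever produces \emph{one-sided} estimates $\|f(T)\|\le\|f\|_{\infty,\overline{\D}}$, whereas the parallelogram law is an \emph{equality}, so the crux is to manufacture matching lower bounds. I would address this by combining the inequalities obtained from a sufficiently rich family of test operators with a duality argument: the inequality for a configuration $(u,v)$ together with the companion inequality obtained after interchanging the roles of the two vectors and passing to supporting functionals in $\X^{*}$ should pinch the estimates to equalities, forcing $t\mapsto\|u+tv\|^{2}$ to be a quadratic polynomial and hence the parallelogram identity. An equivalent and perhaps cleaner route, which I would pursue in parallel, is the contrapositive: assuming the parallelogram law fails for some $x,y$, I would quantify the failure and engineer an explicit norm-one operator $T$ together with a low-degree rational function $f$ for which $\|f(T)\|>\|f\|_{\infty,\overline{\D}}$, contradicting the hypothesis. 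In either formulation the delicate point is the same, namely turning the qualitative gap between a non-Euclidean norm and the Euclidean one into a quantitative strict violation (or a forced equality) of von Neumann's inequality, and this sharp norm estimate is where I expect the real work to lie.
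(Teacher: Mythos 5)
Your forward direction is fine and matches the paper: norm one gives $\sigma(T)\subseteq\overline{\D}$ and von Neumann's inequality via Theorem \ref{thm:401}. Your converse, however, has a genuine gap, and it sits exactly where you say you expect "the real work to lie." The test-operator machinery is sound as far as it goes: for $T=\phi\otimes v$ with $\phi(v)=0$ one indeed has $\|T\|=1$, $T^2=0$, $f(T)=f(0)I+f'(0)T$, and your Schur--Carath\'eodory value $\tfrac{1}{2}\bigl(|b|+\sqrt{|b|^2+4|a|^2}\bigr)$ is the correct minimal sup-norm for prescribed $f(0)=a$, $f'(0)=b$. But these only ever produce one-sided upper bounds on $\|au+bv\|$, and nothing in the proposal shows that the collected bounds force $t\mapsto\|u+tv\|^2$ to be quadratic; the "pinching via duality" and the contrapositive variant are announced, not executed. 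Worse, your configuration is intrinsically asymmetric ($\|v\|=1$ while $u$ is normalized by $\mathrm{dist}(u,\C v)=1$), so "interchanging the roles of the two vectors" does not literally hand you a companion inequality for the same pair, and the nilpotency that makes your functional calculus explicit is destroyed by the swap.

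The paper (reproducing Foias's and Pisier's argument inside the proof of $(iv)\Rightarrow(i)$ of Theorem \ref{thm:103}) closes this in a way worth contrasting with your plan. It uses only the M\"obius functions $\phi_\alpha$, giving $\|(T-\alpha I_\X)y\|\leq\|(I_\X-\bar{\alpha}T)y\|$ for \emph{every} norm-one $T$, and then the Hahn--Banach rank-one operator $Tx=f_p(x)q$ with $\|f_p\|=f_p(p)=1$, so $\|T\|=1$ and $Tp=q$ (no nilpotency required). Evaluating at $y=p$ gives $\|q-\alpha p\|\leq\|p-\bar{\alpha}q\|$, and since the roles of the unit vectors $p,q$ are genuinely symmetric here, swapping them yields the reverse bound and hence the \emph{equality} $\|q-\alpha p\|=\|p-\bar{\alpha}q\|$ --- precisely the automatic pinching you were hoping to engineer. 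Crucially, though, the equality obtained is the norm-symmetry identity $\|ap+bq\|=\|bp+aq\|$ for real $a,b$ and $\|p\|=\|q\|$, \emph{not} the parallelogram law; the bridge from that symmetry condition to an inner product is a nontrivial classical theorem of Ficken (\cite{FAF}, quoted as Theorem \ref{cor:new-001} in the paper), which the argument invokes in place of Jordan--von Neumann. So to repair your proof you should either redirect the target from the parallelogram law to Ficken's criterion (after which the swap argument completes everything), or else supply the missing quantitative step; as written, the hard half of the theorem is assumed rather than proved.
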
 
In \cite{JPR}, the authors of this article and S. Roy produced another set of equivalent conditions each of which converts a Banach space into a Hilbert space. We recall a part of that theorem here.

\begin{thm}[\cite{JPR}, Theorem 6.12]\label{thm:101}
Let $\mathbb{X}$ be a complex Banach space. Then the following are equivalent.

\smallskip

\begin{enumerate}

\item[(i)] $\mathbb{X}$ is a Hilbert space.

\smallskip

\item[(ii)] Every strict contraction $T$ on $\X$ dilates to an isometry.

\smallskip

\item[(iii)] For every strict contraction $T\in \mathcal{B}(\mathbb{X}),$ the function $A_T: \mathbb{X}\to [0,\infty)$ given by 
\[
  A_T(x) = \left( \| x\|^2 - \|Tx\|^2 \right)^{\frac{1}{2}}, \quad x\in \mathbb{X},
\]
defines a norm on $\mathbb{X}.$
\end{enumerate}
\end{thm}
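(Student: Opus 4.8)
The plan is to establish the cycle $(i)\Rightarrow(ii)\Rightarrow(iii)\Rightarrow(i)$, so each implication is proved once. The implication $(i)\Rightarrow(ii)$ is immediate: by Theorem \ref{thm:401}, in a Hilbert space every contraction---in particular every strict contraction---dilates to an isometry via the classical Sz.-Nagy construction. Hence the substance lies in $(ii)\Rightarrow(iii)$ and in $(iii)\Rightarrow(i)$.

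For $(ii)\Rightarrow(iii)$ I would extract an explicit formula for $A_T$ from the dilation data. Let $T$ be a strict contraction with isometric dilation $V$ on $\widetilde{\X}$, isometry $W:\X\to\widetilde{\X}$, and $\widetilde{\X}\cong W(\X)\oplus_2\BL$ as in Definition \ref{Definition of Dilation of contractions on Banach space}. Evaluating the dilation identity at $f(z)=z$ gives $\widehat{T}=P_{W(\X)}V|_{W(\X)}$. Writing $Q$ for the complementary norm-one projection onto $\BL$ and $w=Wx$, the isometric $\oplus_2$ splitting yields $\|x\|^2=\|w\|^2=\|Vw\|^2=\|\widehat{T}w\|^2+\|QVw\|^2=\|Tx\|^2+\|QVWx\|^2$, using that $\widehat{W}$ is a surjective isometry so $\|\widehat{T}Wx\|=\|Tx\|$. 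Consequently $A_T(x)=\|QVWx\|$ is the composition of the bounded linear map $x\mapsto QVWx$ with the norm of $\BL$, hence automatically a seminorm; it is positive definite because $A_T(x)=0$ would force $\|Tx\|=\|x\|$, impossible for $x\neq 0$ when $\|T\|<1$. The one point to watch is reading $\oplus_2$ as an isometric $\ell_2$-sum, which is exactly what makes $P_{W(\X)}$ norm one and validates the Pythagorean step.

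The crux is $(iii)\Rightarrow(i)$, which I would reduce to the parallelogram law. Fix $x,y\in\X$ and, by Hahn--Banach, choose a norming functional $\psi\in\X^*$ with $\|\psi\|=1$ and $\psi(x)=\|x\|$. For $0<t<1$ and a fixed unit vector $u$, the rank-one operator $T_t(\cdot)=t\,\psi(\cdot)\,u$ is a strict contraction with $\|T_t x'\|=t|\psi(x')|$, so $A_{T_t}(x')^2=\|x'\|^2-t^2|\psi(x')|^2$. By hypothesis each $A_{T_t}$ is a norm; applying its triangle inequality to $x$ and $y$ and letting $t\to 1^-$ kills the $x$-term, since $A_{T_t}(x)=\sqrt{1-t^2}\,\|x\|\to 0$, and yields $\|x+y\|^2\le\|x\|^2+\|y\|^2+2\|x\|\,\mathrm{Re}\,\psi(y)$. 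Replacing $y$ by $-y$ and adding the two inequalities cancels the $\mathrm{Re}\,\psi(y)$ terms and produces the one-sided parallelogram inequality $\|x+y\|^2+\|x-y\|^2\le 2\|x\|^2+2\|y\|^2$ for all $x,y\in\X$. Applying this same inequality to the pair $\big(\tfrac{p+q}{2},\tfrac{p-q}{2}\big)$ converts it into its own reverse, forcing equality; the resulting parallelogram law identifies $\X$ as a Hilbert space by the Jordan--von Neumann theorem.

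The main obstacle will be engineering this limiting argument cleanly. One must verify that the radicands stay nonnegative as $t\to 1$, which follows from $|\psi(x+y)|\le\|x+y\|$; check that the imaginary parts of $\psi(y)$ genuinely cancel upon symmetrizing in $\pm y$ so that a purely real inequality survives; and confirm that the self-improving substitution is legitimate uniformly over $\X$ rather than for a single pair. Once these points are secured, the remaining work---the explicit rank-one computation and the algebra turning the triangle inequality into the parallelogram identity---is routine bookkeeping.
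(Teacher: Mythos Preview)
The paper does not actually prove Theorem~\ref{thm:101}; it is quoted from \cite{JPR} (Theorem 6.12 there) and stated without proof, so there is no in-paper argument to compare against. That said, your proposed proof is correct in all three implications and is worth a brief comment.

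Your $(ii)\Rightarrow(iii)$ is exactly the natural extraction: the $\oplus_2$ hypothesis in Definition~\ref{Definition of Dilation of contractions on Banach space} is precisely what makes the Pythagorean identity $\|Vw\|^2=\|P_{W(\X)}Vw\|^2+\|QVw\|^2$ valid, and from there $A_T(x)=\|QVWx\|$ is immediate. Your $(iii)\Rightarrow(i)$ via rank-one strict contractions $T_t=t\,\psi(\cdot)u$ and the limit $t\to 1^-$ is clean; the algebra you sketch does give $\|x+y\|^2\le\|x\|^2+\|y\|^2+2\|x\|\,\mathrm{Re}\,\psi(y)$, symmetrizing in $\pm y$ yields the one-sided parallelogram inequality, and the substitution $(u,v)\mapsto\big(\tfrac{p+q}{2},\tfrac{p-q}{2}\big)$ reverses it. The obstacles you flag are not genuine obstacles: nonnegativity of the radicands is automatic from $|\psi(\cdot)|\le\|\cdot\|$, and since the derived inequality already contains only $\mathrm{Re}\,\psi(y)$, the cancellation under $y\mapsto -y$ is immediate. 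This route to $(iii)\Rightarrow(i)$ through the parallelogram law is in the same spirit as the paper's own proof of $(vi)\Rightarrow(i)$ in Theorem~\ref{thm:103}, though there the one-sided inequality is obtained from a single $2\times 2$ matricial polynomial rather than a limiting family of rank-one operators.
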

 This leaves us with a natural question: what are all strict contractions on a complex non-Hilbert Banach space that admit isometric dilation ? This question was further answered in the same article in the following way.
 
\begin{thm}[\cite{JPR}, Theorem 6.13]\label{thm:102}
Suppose $\X$ is a complex Banach space. Then a strict contraction $T$ on $\X$ dilates to an isometry if and only if the function $A_T: \mathbb{X}\to [0,\infty)$ given by $
  A_T(x) = \left( \| x\|^2 - \|Tx\|^2 \right)^{\frac{1}{2}}$
defines a norm on $\mathbb{X}.$ 
\end{thm}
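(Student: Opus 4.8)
The plan is to treat the two implications separately, using the \emph{defect space} --- the Banach-space surrogate for the range of the Sz.-Nagy defect operator --- as the common thread. In both directions the quantity $A_T(x)$ will be realized as the norm of the image of $x$ under a \emph{linear} map into an auxiliary Banach space; once this is in place, subadditivity of $A_T$ is automatic, while positivity comes for free because $T$ is a strict contraction: for $x\neq 0$ one has $A_T(x)^2=\|x\|^2-\|Tx\|^2\geq (1-\|T\|^2)\|x\|^2>0$, so $A_T(x)=0$ forces $x=0$.

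First I would prove necessity, which is the shorter half. Suppose $V$ on $\widetilde{\X}=W(\X)\oplus_2\BL$ is an isometric dilation of $T$ in the sense of Definition \ref{Definition of Dilation of contractions on Banach space}, and let $Q=I-P_{W(\X)}$ be the complementary norm-one projection onto $\BL$. Taking $f(z)=z$ in the dilation identity gives $\widehat{T}=P_{W(\X)}V|_{W(\X)}$, so the $W(\X)$-component of $VW(x)$ is exactly $W(Tx)$. Setting $b:=QVW:\X\to\BL$, which is manifestly linear, the $\oplus_2$ splitting together with the fact that $V$ and $W$ are isometries yields
\[
\|x\|^2=\|VW(x)\|^2=\|W(Tx)\|^2+\|b(x)\|_\BL^2=\|Tx\|^2+\|b(x)\|_\BL^2,
\]
whence $A_T(x)=\|b(x)\|_\BL$. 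Since $b$ is linear and $\|\cdot\|_\BL$ is a norm, the composite $A_T=\|b(\cdot)\|_\BL$ is a seminorm; combined with the positivity noted above, $A_T$ is a norm.

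For sufficiency I would reconstruct the minimal isometric dilation in the Banach setting. Assuming $A_T$ is a norm, let $\mathcal{D}_T$ be the completion of $(\X,A_T)$ and $\iota:\X\to\mathcal{D}_T$ the canonical map with $\|\iota(x)\|_{\mathcal{D}_T}=A_T(x)$. Put $\BL=\ell_2(\mathcal{D}_T)$, $\widetilde{\X}=\X\oplus_2\BL$, let $W(x)=(x,0,0,\dots)$, and define the forward-shift-type map
\[
V(x,d_0,d_1,\dots)=(Tx,\iota(x),d_0,d_1,\dots).
\]
The identity $A_T(x)^2=\|x\|^2-\|Tx\|^2$ is precisely what makes $V$ an isometry. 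Iterating, the first coordinate of $V^n W(x)$ is $T^nx$, so $P_{W(\X)}V^n|_{W(\X)}=\widehat{T}^n$ for every $n$; this establishes the dilation identity for monomials, hence for all polynomials. Because $\widetilde{\X}=W(\X)\oplus_2\BL$ with $P_{W(\X)}$ of norm one, every structural requirement of the definition is met.

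The step I expect to demand the most care is upgrading the intertwining relation from polynomials to arbitrary $f\in Rat(\overline{\D})$. Since $V$ is an isometry one has $\sigma(V)\subseteq\overline{\D}$, while $\sigma(T)\subseteq\D$ as $T$ is a strict contraction; each $f\in Rat(\overline{\D})$ is holomorphic on a neighborhood of $\overline{\D}$, and by Runge's theorem (using that $\overline{\D}$ is polynomially convex) can be approximated uniformly on $\overline{\D}$ by polynomials $p_k$. I would then argue, via the Riesz holomorphic functional calculus on a contour surrounding $\overline{\D}$ inside the region of holomorphy, that $p_k(V)\to f(V)$ and $p_k(T)\to f(T)$, and pass to the limit in $P_{W(\X)}p_k(V)|_{W(\X)}=p_k(\widehat{T})$ to conclude $P_{W(\X)}f(V)|_{W(\X)}=f(\widehat{T})$. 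Controlling the functional calculus of the (possibly non-invertible) isometry $V$ uniformly on that contour is the technical heart of the argument; the algebraic verifications above are routine by comparison.
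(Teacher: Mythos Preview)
The paper does not prove this theorem; it is quoted verbatim from the authors' companion preprint \cite{JPR} (their Theorem~6.13) and used as a black box. So there is no proof in the present paper to compare your proposal against.

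That said, your argument is correct and is the expected Banach-space analogue of the Sz.-Nagy--Sch\"affer construction. The necessity direction is clean: writing $VW(x)$ in the $\oplus_2$ decomposition and reading off $A_T(x)=\|QVW(x)\|_{\BL}$ is exactly the right move. For sufficiency, your Sch\"affer-type dilation $V(x,d_0,d_1,\dots)=(Tx,\iota(x),d_0,d_1,\dots)$ on $\X\oplus_2\ell_2(\mathcal D_T)$ is the standard one, and the isometry check is immediate from the defining identity $A_T(x)^2=\|x\|^2-\|Tx\|^2$.

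One simplification: the step you flag as the ``technical heart'' is lighter than you suggest. Every $f\in Rat(\overline{\D})$ has its (finitely many) poles outside $\overline{\D}$, so $f$ is holomorphic on some disk $D_r$ with $r>1$. Its Taylor series $f(z)=\sum_{n\ge 0} a_n z^n$ then has radius of convergence at least $r$, hence $\sum_{n\ge 0}|a_n|<\infty$. Since $\|V^n\|=1$ for all $n$ (as $V$ is an isometry) and $\|T^n\|\le \|T\|^n$, the series $\sum a_n V^n$ and $\sum a_n T^n$ converge absolutely in operator norm to $f(V)$ and $f(T)$ respectively; applying the bounded map $P_{W(\X)}(\,\cdot\,)|_{W(\X)}$ to the polynomial identity $P_{W(\X)}p_k(V)|_{W(\X)}=p_k(\widehat T)$ and letting $k\to\infty$ finishes the job. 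No Runge approximation or contour estimates are needed.
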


In this Section, we further explore and analyse the interplay between an underlying Banach space $\X$ and $\overline{\D}$ being a spectral or complete spectral set for certain operators on $\X$ and $\ell_2(\X)$. Before going to the main theorem, we state a result from \cite{FAF} which will be useful.

\begin{thm}[\cite{FAF}, Corollary 2] \label{cor:new-001}
Let $\X$ be a complex Banach space. Then $\X$ is a Hilbert space if and only if for any two elements $x,y \in \X$ with $\|x\|=\|y\|$ the equality $\|ax + by\| = \|bx + ay\|$ holds for all real scalars $a,b$.
\end{thm}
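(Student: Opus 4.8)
The plan is to prove the two implications separately, with the forward direction being a one-line computation and the converse carrying essentially all the weight.

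For the forward implication, suppose $\X$ is a Hilbert space with inner product $\langle\cdot,\cdot\rangle$. Given real scalars $a,b$ and vectors $x,y$ with $\|x\|=\|y\|$, I would simply expand the two squared norms. Because $a,b$ are real, the cross terms $2ab\,\mathrm{Re}\langle x,y\rangle$ appearing in $\|ax+by\|^2$ and in $\|bx+ay\|^2$ coincide, so
\[
\|ax+by\|^2-\|bx+ay\|^2=(a^2-b^2)\big(\|x\|^2-\|y\|^2\big)=0,
\]
and the required equality follows at once.

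For the converse the strategy is to recover the parallelogram law and then invoke the Jordan--von Neumann theorem, using the complex polarization identity to produce a genuine complex inner product. Since the parallelogram law is an identity in only two vectors, it is enough to show that every real $2$-dimensional subspace $V\subseteq\X$ is Euclidean. Fix such a $V$. The crucial point is that the hypothesis manufactures isometries of $V$: for linearly independent unit vectors $x,y\in V$ (hence a basis of $V$), the linear map $\sigma$ determined by $\sigma x=y$ and $\sigma y=x$ sends $ax+by$ to $bx+ay$, and the assumption $\|ax+by\|=\|bx+ay\|$ says precisely that $\sigma$ preserves the norm on all of $V$. Thus $\sigma$ is a linear isometry of $V$ carrying $x$ to $y$; handling the degenerate case $y=\pm x$ with $\pm I$, I conclude that the linear isometry group $G$ of $V$ acts transitively on the unit sphere $S_V=\{v\in V:\|v\|=1\}$.

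It then remains to argue that transitivity forces $V$ to be Euclidean. As a closed bounded subgroup of $GL(V)$ in finite dimension, $G$ is compact, so averaging any inner product against Haar measure yields a $G$-invariant inner product $\langle\cdot,\cdot\rangle_0$ with norm $\|\cdot\|_0$, and every $g\in G$ preserves $\|\cdot\|_0$. Fixing $u\in S_V$ and setting $c:=\|u\|_0$, transitivity gives for each $v\in S_V$ some $g\in G$ with $gu=v$, so $\|v\|_0=\|gu\|_0=\|u\|_0=c$; hence $\|\cdot\|_0$ is constant on $S_V$, and by homogeneity $\|\cdot\|_0=c\,\|\cdot\|$ throughout $V$. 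Therefore $\|\cdot\|$ is Euclidean on $V$, the parallelogram law holds on all of $\X$, and completeness makes $\X$ a Hilbert space. The main obstacle lies entirely in this converse, and within it in the transitivity-implies-Euclidean step: one must set up the compactness and averaging argument carefully to obtain the invariant inner product and then exploit transitivity to pin the original norm to the Euclidean one. Secondary care is needed in treating linearly dependent pairs when verifying transitivity, and in passing from the real parallelogram law to a bona fide complex inner product via complex polarization.
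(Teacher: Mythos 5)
Your proof is correct, but note that the paper contains no internal proof of this statement to compare against: it is imported verbatim as Corollary 2 of Ficken's 1944 note, so the relevant comparison is with Ficken's original argument. Ficken proceeds elementarily, manipulating the hypothesis $\|ax+by\|=\|bx+ay\|$ directly to extract the parallelogram law through a chain of norm identities and a limiting argument, with no group theory anywhere. Your route is genuinely different and more structural: the hypothesis says exactly that for unit vectors $x,y$ spanning a real two-dimensional subspace $V\subseteq\X$ the swap $x\mapsto y$, $y\mapsto x$ is a linear isometry of $V$, whence the linear isometry group $G$ of $V$ acts transitively on its unit sphere (with $\pm I$ covering dependent pairs); you then invoke the classical Auerbach--Mazur--Ulam-type principle, via compactness of $G$ and Haar averaging, that a finite-dimensional norm admitting a transitive linear isometry group is Euclidean, and you finish with the parallelogram law on every pair (dependent pairs being trivial), Jordan--von Neumann, and complex polarization to produce a genuine complex inner product, completeness then making $\X$ a Hilbert space. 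The steps you flag as delicate do check out: $G$ is closed and bounded in the algebra of linear maps on $V$ (a pointwise limit of isometries is an isometry, hence injective, hence invertible in finite dimension), the averaged form $\langle u,w\rangle_0=\int_G \langle gu,gw\rangle\,d\mu(g)$ is a $G$-invariant inner product, and constancy of $\|\cdot\|_0$ on the unit sphere forces $\|\cdot\|_0=c\,\|\cdot\|$ by homogeneity. What your approach buys is brevity and conceptual transparency---it isolates homogeneity of the unit sphere as the real content of Ficken's condition---at the cost of heavier standard machinery (Haar measure on compact groups, Jordan--von Neumann), whereas Ficken's computation is self-contained; this trade-off is consistent with the paper's own practice, since its proof of $(vi)\Rightarrow(i)$ in Theorem \ref{thm:103} likewise stops at the parallelogram law and tacitly invokes Jordan--von Neumann.
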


Now we are in a position to present one of the main results of this paper.

\begin{thm}\label{thm:103}
Let $\X$ be a complex Banach space. Then the following are equivalent:

\begin{enumerate}
\item[(i)] $\X$ is a Hilbert space;

\smallskip

\item[(ii)] $\overline{\D}$ is a spectral set for $\widehat{M}_z$ on $\ell_2(\X)$, where $\widehat{M}_z(a_0, a_1, \dots)=(a_1,a_2, \dots)$;

\smallskip

\item[(iii)] $\overline{\D}$ is a spectral set for $M_z$ on $\ell_2(\X)$, where ${M}_z(a_0, a_1, \dots)=(\mathbf 0,a_0, \dots)$;

\smallskip

\item[(iv)] $\overline{\D}$ is a spectral set for every strict contraction on $\X$;

\smallskip

\item[(v)] $\overline{\D}$ is a complete spectral set for every contraction $T$ on $\X$ with $\|T\|=1$;

\smallskip

\item[(vi)] $\overline{\D}$ is a complete spectral set for the identity operator $I_\X$ on $\X$.
\end{enumerate}
\end{thm}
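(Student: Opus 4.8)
The plan is to prove that (i) implies each of (ii)--(vi) and that each of (ii)--(vi) implies (i). The forward implications from (i) are uniform: if $\X$ is a Hilbert space then so is $\ell_2(\X)$, on which $M_z$ is an isometry, $\widehat{M}_z$ is a co-isometry, and both $I_\X$ and every strict contraction on $\X$ are contractions; hence each of these operators is a Hilbert-space contraction and Theorem \ref{thm:401} (von Neumann together with Arveson) yields that $\overline{\D}$ is a complete spectral set, in particular a spectral set, for every one of them. This disposes of (i)$\Rightarrow$(ii)--(vi) in one stroke.

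For the reverse implications I would first clear the three that reduce to results already in hand. Since a complete spectral set is a spectral set, (v) immediately says that $\overline{\D}$ is a spectral set for every contraction of norm one, which is exactly the Foias criterion (Theorem \ref{thm:Foias-New}), giving (v)$\Rightarrow$(i). For (iv)$\Rightarrow$(i), given $T$ with $\|T\|=1$ I would apply (iv) to the strict contraction $rT$ for $r<1$ and let $r\to 1^-$: since $\sigma(T)\subseteq\overline{\D}$ and $f(rT)\to f(T)$ in norm for every $f\in Rat(\overline{\D})$ (polynomials converge, and $q(rT)^{-1}\to q(T)^{-1}$ by continuity of inversion on the invertibles), von Neumann's inequality passes to $T$, and Foias again gives (i). For (vi)$\Rightarrow$(i) I would feed the complete-spectral-set inequality the constant $2\times 2$ matrix $A=\frac{1}{\sqrt{2}}\begin{pmatrix} 1 & 1 \\ 1 & -1\end{pmatrix}$: because $F(I_\X)$ for a constant symbol $A$ is the ampliation $A\otimes I_\X$ on $\ell_2^2(\X)$ and $\|A\|_{\ell_2^2(\C)}=1$, condition (vi) forces $\|A\otimes I_\X\|\le 1$; as $A^2=I$ this makes $A\otimes I_\X$ a norm-one involution, hence an isometry, which is precisely the parallelogram law $\|x+y\|^2+\|x-y\|^2=2\|x\|^2+2\|y\|^2$, so $\X$ is a Hilbert space.

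The crux is (iii)$\Rightarrow$(i) (and its twin (ii)$\Rightarrow$(i)). Here I would test the spectral-set inequality for $M_z$ on the two-term vector $v=(x,y,0,0,\dots)$ using the Blaschke factor $b_a(z)=\frac{z-a}{1-\bar a z}$, which has $\|b_a\|_{\infty,\overline{\D}}=1$. Computing the lower-triangular Toeplitz action of $b_a(M_z)$ on $v$ and summing the geometric tail gives, for every real $a\in(0,1)$,
\[
a^2\|x\|^2+\|(1-a^2)x-ay\|^2+(1-a^2)\|ax+y\|^2\ \le\ \|x\|^2+\|y\|^2,
\]
an inequality that degenerates to equality at $a=0$. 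Writing $\psi(t)=\|x+ty\|^2$ and $\chi(s)=\|sx+y\|^2$ (both convex, hence possessing one-sided derivatives), I would expand the left-hand side to first order at $a=0^+$; the bound forces $\psi'_-(0)\ge\chi'_+(0)$, while exchanging $x$ and $y$ forces $\chi'_-(0)\ge\psi'_+(0)$. Chaining these two with the convexity inequalities $\psi'_-(0)\le\psi'_+(0)$ and $\chi'_-(0)\le\chi'_+(0)$ collapses everything to a single value, so the norm is Gateaux differentiable with a symmetric derivative, $\frac{d}{dt}\|x+ty\|^2\big|_{0}=\frac{d}{dt}\|y+tx\|^2\big|_{0}$ for all $x,y$. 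This symmetry is exactly the content of Theorem \ref{cor:new-001} (equivalently, it integrates to the parallelogram law), so $\X$ is a Hilbert space.

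The implication (ii)$\Rightarrow$(i) I would handle by the same mechanism, with one adjustment that I expect to be the main technical obstacle: for the backward shift a finitely supported input is truncated and loses the geometric tail that made the forward-shift estimate sharp, so I would instead apply $b_a(\widehat{M}_z)$ to a superposition $v_\lambda^x+v_\mu^y$ of the genuine eigenvectors $v_\lambda^x=(x,\lambda x,\lambda^2 x,\dots)$ of $\widehat{M}_z$, produce the analogous one-parameter inequality with equality at the degenerate value, and run the same first-order argument. Alternatively one may pass to the dual: $\widehat{M}_z$ on $\ell_2(\X)$ is the adjoint of $M_z$ on $\ell_2(\X^*)$, so (ii) transfers to the forward-shift statement over $\X^*$, whence $\X^*$ and therefore $\X$ is a Hilbert space. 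The delicate points throughout will be justifying the termwise first-order expansion purely from convexity (no smoothness of the norm may be assumed in advance) and confirming that the sign of the first-order coefficient points in the right direction; everything else is bookkeeping.
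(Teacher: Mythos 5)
Your proposal is correct in overall strategy but takes a genuinely different route from the paper on the hard implications, and the difference is instructive. The paper disposes of (ii)$\Rightarrow$(i) and (iii)$\Rightarrow$(i) without any limiting or convexity analysis: it tests the single M\"obius symbol $\phi_\alpha(z)=\frac{z-\alpha}{1-\bar\alpha z}$ against the padded vector $\underline{x_0}=(\mathbf 0,x,y,\mathbf 0,\dots)$ with $\|x\|=\|y\|$, and the substitution $\underline{y}=(I-\bar\alpha\widehat{M}_z)\underline{x_0}$ collapses the spectral-set inequality to $\|y-\alpha x\|\le\|x-\bar\alpha y\|$; swapping $x$ and $y$ yields equality, and Theorem \ref{cor:new-001} applies verbatim. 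The leading zero in $\underline{x_0}$ is precisely the device you flagged as the main obstacle for the backward shift: it makes a finitely supported test vector work for $\widehat{M}_z$ exactly as for $M_z$, so neither eigenvector superpositions nor duality are needed. That said, your first-order argument for (iii)$\Rightarrow$(i) is sound as far as it goes: the inequality $a^2\|x\|^2+\|(1-a^2)x-ay\|^2+(1-a^2)\|ax+y\|^2\le\|x\|^2+\|y\|^2$ is computed correctly, equality at $a=0$ does force $\chi'_+(0)\le\psi'_-(0)$, and the chain $\chi'_+(0)\le\psi'_-(0)\le\psi'_+(0)\le\chi'_-(0)\le\chi'_+(0)$ legitimately produces two-sided derivatives and their symmetry. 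Your treatments of the easier implications are also valid and in places slicker than the paper's: (v)$\Rightarrow$(i) directly via Foias (the paper routes through (vi)); (iv)$\Rightarrow$(i) by passing the full von Neumann inequality through $f(rT)\to f(T)$ and citing Foias as a black box, where the paper instead replays the Foias--Pisier rank-one construction; and for (vi)$\Rightarrow$(i) your constant symmetric involution $A$ gives the parallelogram law as an equality in one stroke, where the paper uses $F(z)=zA'$ with a rotation $A'$ and a substitution to obtain the reverse inequality.

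Two loose ends need repair, both fixable. First, the derivative symmetry $\frac{d}{dt}\|x+ty\|^2\big|_0=\frac{d}{dt}\|y+tx\|^2\big|_0$ is \emph{not} ``exactly the content of Theorem \ref{cor:new-001}'': Ficken's hypothesis is the norm identity $\|ax+by\|=\|bx+ay\|$ for $\|x\|=\|y\|$, and your condition is only its infinitesimal shadow. To finish you must argue that, by convexity of $\frac12\|\cdot\|^2$ together with the two-sidedness you established, the directional derivative $y\mapsto B(x,y):=\frac12\frac{d}{dt}\|x+ty\|^2\big|_0$ is linear in $y$ (sublinear and odd), hence by symmetry bilinear, with $B(x,x)=\|x\|^2$; the parallelogram law then follows by expanding $B(x\pm y,x\pm y)$. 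This is standard and your parenthetical ``integrates to the parallelogram law'' points at it, but it is an additional argument, not a citation. Second, in the duality fallback for (ii) the adjoint goes the other way: since $\ell_2(\X)^*=\ell_2(\X^*)$ isometrically, the adjoint of $\widehat{M}_z$ on $\ell_2(\X)$ \emph{is} $M_z$ on $\ell_2(\X^*)$ (your phrasing would place the relevant backward shift on $\ell_2(\X^{**})$, where spectral-set information for a restriction does not transfer). With the corrected direction, $(f(T))^*=\tilde f(T^*)$ with $\tilde f(z)=\overline{f(\bar z)}$ and the conjugation-invariance of $\overline{\D}$ give that $\overline{\D}$ is a spectral set for $M_z$ on $\ell_2(\X^*)$, so by your (iii)$\Rightarrow$(i) the space $\X^*$ is Hilbert, hence so is $\X^{**}$, hence so is the isometrically embedded $\X$.
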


\begin{proof}
The forward implications $(i) \Rightarrow (ii)$, $(i) \Rightarrow (iii)$, $(i) \Rightarrow (iv)$ and $(i) \Rightarrow (v) \Rightarrow (vi)$ follow from Theorem \ref{thm:401}. We prove here $(ii) \Rightarrow (i)$, $(iii)\Rightarrow (i)$, $(iv)\Rightarrow (i)$ and $(vi) \Rightarrow (i)$.

\medskip

\noindent $(ii) \Rightarrow (i)$. For every $\alpha \in \D$, consider the automorphism $\phi_\alpha$ of the unit disk $\D$ defined by $\phi_\alpha(z) = \frac{z- \alpha}{1-\overline{\alpha}z}$. By the hypothesis, $\phi_\alpha(\widehat{M}_z)$ is a contraction on $\ell_2(\X)$. For any $x,y \in \X$ with $\|x\|=\|y\|$, define $\underline{x_0} = (\mathbf{0}, x, y, \mathbf{0}, \cdots)$ and set $\underline{y} = (I- \bar{\alpha}\widehat{M}_z)\underline{x_0}$, where $I$ is the identity operator on $\ell_2(\X)$. Then $\|\phi_\alpha(\widehat M_z)(\underline{y})\|\leq \|\underline{y}\|$ implies that
\begin{align}\label{eq:203}
   \|\widehat{M}_z\underline{x_0} - \alpha \underline{x_0}\|^2 \leq \|(I - \bar{\alpha}\widehat{M}_z)\underline{x_0}\|^2 &\Rightarrow \|(x,y- \alpha x, -\alpha y, \mathbf{0}, \cdots)\|^2 \leq \|( -\bar{\alpha}x, x-\bar{\alpha}y, y, \mathbf{0}, \cdots)\|^2 \nonumber \\
   & \Rightarrow \|x\|^2 + \|y-\alpha x\|^2 + \|\alpha y\|^2 \leq \|\bar{\alpha}x\|^2 + \|x -\bar{\alpha} y\|^2 + \|y\|^2 \nonumber \\
   & \Rightarrow \|y-\alpha x\| \leq \|x -\bar{\alpha} y\|, \quad \alpha \in \D \quad \left[\text{ as } \|x\|= \|y\|~ \right].
\end{align}
By interchanging the role of $x,y$ and replacing $\alpha$ by $\bar{\alpha}$ in (\ref{eq:203}), we have $\|x- \bar{\alpha} y\| \leq \|y - \alpha x\|$ for all $\alpha\in \D$. Consequently, for all $x,y$ with $\|x\|=\|y\|$ and $\alpha \in \D$, we have that $\|x-\alpha y\| = \|y - \alpha x\|$. In particular, for all $t\in (-1,1)$ and $x,y\in \X$ with $\|x\|=\|y\|$ we have $\|x-ty\|=\|y-tx\|$, which leads to the folowing identity.
\[
\|ax+by\| =\|ay+bx\|, \quad a, b \in \R, \quad \|x\|=\|y\|.
\]
Hence, $\X$ is a Hilbert space by Theorem \ref{cor:new-001}.

\medskip

\noindent $(iii)\Rightarrow (i)$. Let $x, y \in \X$ be arbitrary with $\|x\| = \|y\|$. Consider $\underline{x_0} = (x, y, \mathbf{0}, \cdots) \in \ell_2(\X)$ and set $\underline{y} = (I - \bar{\alpha}M_z)\underline{x_0}$ for all $\alpha \in \D$. An argument similar to that in the proof of $(ii) \Rightarrow (i)$ gives the rest.

\medskip

\noindent $(iv)\Rightarrow (i)$. Let $T\in\mathcal{B}(\X)$ be arbitrary with $\|T\|=1$. For every $\alpha\in \D$, consider the automorphism $\phi_\alpha \in \Aut(\D)$ defined by $\phi_\alpha(z)= \frac{z-\alpha}{1-\overline{\alpha}z}$. By hypothesis, $\overline{\D}$ is a spectral set for $rT$ for all $r\in [0,1)$. So, $(rT - \alpha I_\X)(I_\X - \bar{\alpha} rT)^{-1}$ is a contraction on $\X$, i.e., $\|(rT-\alpha I_\X) (I_\X - \bar{\alpha}rT)^{-1}x\| \leq \|x\|$ for any $x\in \mathbb{X}$. By substituting $x= (I_\X- \bar{\alpha}rT)y$, we have
\begin{equation}\label{eq:204}
   \|(rT-\alpha I_\X)y\| \leq \|(I_\X - \bar{\alpha}rT)y\|, \quad y\in \X, \quad \alpha\in \D, \quad r\in [0,1).
\end{equation}
Now, taking $r\to 1$ in (\ref{eq:204}), we have
\begin{equation}\label{eq:205}
  \|(T-\alpha I_\X)y\| \leq \|(I_\X - \bar{\alpha}T)y\|, \quad y\in \X, \quad \alpha\in \D, 
\end{equation}
which holds for all $T\in \mathcal{B}(\X)$ with $\|T\|=1$. The rest of the proof after the inequality (\ref{eq:205}) follows from Foias's proof of Theorem \ref{thm:Foias-New}. Also, in \cite[Theorem 1.9]{GPII} Pisier provided a similar proof to the fact that $\X$ is a Hilbert space if and only if (\ref{eq:205}) holds for all $T\in\mathcal{B}(\X)$ with $\|T\|=1$. For the convenience of the readers, we imitate their techniques to finish the rest of the proof here. Let $p, q \in \X$ be two arbitrary vectors such that $\|p\|=\|q\| =1$. By the Hahn Banach extension theorem, there exists a bounded linear functional $f_p: \X \to \C$ such that $\|f_p\| = \|p\| =1$ and $f_p(p) = \|p\|^2 =1$. Consider the operator $T: \X \to \X$ defined by $T(x) = f_p(x)q$ for all $x\in \X$. Then we have $T(p)=q$. Since $\|T\| = \|f_p\| \|q\| =1$, by substituting $y=p$ in (\ref{eq:205}) we have
\[
  \|q-\alpha p\| \leq \|p - \bar{\alpha} q\|, \quad \alpha \in \D.
\]
Interchanging the roles of $p$ and $q$ in the above inequality gives $\|p-\bar{\alpha} q \| \leq \|q-\alpha p\|$. Consequently, we have 
\[
   \|p- \bar{\alpha} q\| = \|q- \alpha p\|, \quad \alpha \in \D.
\]
The continuity of the norm implies that $\|p+ tq \| = \|q + tp\|$ holds for any $t\in [-1, 1]$. Moreover, for $t\in \R$ with $|t| \geq 1$, we have \[
\|p + tq\| = t\|\frac{p}{t} + q\| = t \|p + \frac{1}{t} q\| = \|tp + q\|.
\]
It follows from here that $\|p + tq\|= \|tp + q\|$ for all $t\in \mathbb R$. This further implies that
\[
  \|ap + bq \| = \|bp + aq\|, \quad a, b \in \R.
\]
Therefore, we conclude that if $p, q\in \X$ with $\|p\|=\|q\|=1$, then $\|ap + bq\| = \|bp + aq\|$ holds for all $a,b \in \R$. Now, for arbibrary $x, y \in \X$ with $\|x\| = \|y\| \neq 0$, and $a, b \in \R$ we have 
\[
  \|ax + by \| = \|x\| \left\|a\frac{x}{\|x\|} + b \frac{y}{\|y\|}\right\| = \|x\| \left\|b \frac{x}{\|x\|} + a\frac{y}{\|y\|}\right\| = \|ay + bx\|.
\]
The first and second equalities hold because of the fact that $\|x\|=\|y\|\neq 0$. Of course, $\|ax+by\| = \|bx + ay\|$ holds for all $a, b\in \R$ if either $x$ or $y$ or both of them are zero vectors. Hence, $\X$ is a Hilbert space by Theorem \ref{cor:new-001}. 

\smallskip

\noindent $(vi) \Rightarrow (i)$. Consider the matricial polynomial $F$ defined by $F(z)= \left(\begin{matrix}
z\slash \sqrt{2} & - z\slash \sqrt{2} \\
z\slash \sqrt{2} & z\slash \sqrt{2}
\end{matrix}\right)$. Since the scalar matrix $\left(\begin{matrix}
1\slash \sqrt{2} & - 1\slash \sqrt{2} \\
1\slash \sqrt{2} & 1\slash \sqrt{2}
\end{matrix}\right)= \left(\begin{matrix}
\cos \frac{\pi}{4} & - \sin \frac{\pi}{4} \\
\sin \frac{\pi}{4} & \, \cos \frac{\pi}{4}
\end{matrix}\right)$ is a unitary, it follows that $\|F(z)\|=|z|$ for every $z\in \overline{\D}$ and hence $\|F\|_{\infty, \overline{\D}} =1$. So, by assumption $F(I_\X) = \begin{bmatrix}
I_\X\slash \sqrt{2} & - I_\X\slash \sqrt{2} \\
I_\X\slash \sqrt{2} & I_\X\slash \sqrt{2}
\end{bmatrix}$ is a contraction on $\X \oplus_2 \X$. Let $(x,y) \in \X \oplus_2 \X$ be arbitrary. Then $\|F(I_\X)(x,y)\|^2 \leq \|(x,y)\|^2$ and this implies that
\begin{align}
& \quad \left\| \left( \frac{x-y}{\sqrt{2}}, \frac{x+y}{\sqrt{2}}\right)\right\|^2 \leq \|x\|^2 + \|y\|^2 \notag \\
& \Rightarrow \|x-y\|^2 + \|x+y\|^2 \leq 2 (\|x\|^2 + \|y\|^2). \label{eq:401}
\end{align}
Set $x' = \dfrac{x+y}{2}$ and $y' = \dfrac{x-y}{2}$. Since (\ref{eq:401}) holds for all $(x,y) \in \X \oplus_2 \X$, substituting $(x',y')$ in (\ref{eq:401}) we have
\begin{align}
& \quad  \|x'-y'\|^2 + \|x'+y'\|^2 \leq 2 (\|x'\|^2 + \|y'\|^2) \notag \\ &  \Rightarrow 2(\|x\|^2 + \|y\|^2) \leq \|x-y\|^2 + \|x+y\|^2. \label{eq:402}
\end{align}
The inequalities $(\ref{eq:401})$ and $(\ref{eq:402})$ together establish the parallelogram law on $\X$. Hence $\X$ is a Hilbert space and the proof is complete.
\end{proof}

\smallskip

\section{Spectral set vs. complete spectral set vs. dilation of a Banach space contraction}\label{Subsec:041}

\vspace{0.2cm}

\noindent Recall that Theorem \ref{thm:401} provides equivalence of the following four statements for a Hilbert space operator $T$: $T$ is a contraction, i.e. $\|T\| \leq 1$; the closed unit disk $\overline{\D}$ is a spectral set for $T$; the closed unit disk $\overline{\D}$ is a complete spectral set for $T$; $T$ admits dilation to a Hilbert space isometry. Theorem \ref{thm:Foias-New} due to Foias guarantees that $\overline{\D}$ may not be a spectral set for a Banach space contraction. In Section \ref{Sec:03}, we have seen that even more is true. Indeed, for every $r> \frac{1}{3}$, there is a Banach space contraction $T_r$ with $\|T_r\|=r$ such that $\overline{\D}$ is not a spectral set for $T_r$. Thus, the operator-norm versus spectral set issue is resolved for Banach space operators. In this Section, we show by counter examples that any two among the other conditions of Theorem \ref{thm:401} are not equivalent in Banach space setting.

\subsection{Spectral set vs. dilation in Banach space}\label{Subsubsec:0411}
We begin with a useful proposition that shows an interplay between operator norm and dilation in Banach space.

\begin{prop}\label{prop:4111}
For every $\lambda\in (0,1)$, there is a Banach space contraction $T_\lambda$ with $\|T_{\lambda}\|=\lambda$ such that $T_\lambda$ does not dilate to any isometry.
\end{prop}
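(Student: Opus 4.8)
The plan is to reduce the dilation statement to the concrete criterion of Theorem \ref{thm:102}: a strict contraction $T$ on a complex Banach space dilates to an isometry \emph{if and only if} the defect function $A_T(x) = (\|x\|^2 - \|Tx\|^2)^{1/2}$ is a norm. Thus, to produce a non-dilatable $T_\lambda$ it suffices to exhibit a complex Banach space $\X$ and an operator $T_\lambda \in \mathcal{B}(\X)$ with $\|T_\lambda\| = \lambda$ for which $A_{T_\lambda}$ fails to be a norm. Since $\lambda \in (0,1)$, any such $T_\lambda$ is automatically a strict contraction, so $A_{T_\lambda}$ is nonnegative, positive-definite (because $\|T_\lambda x\| \le \lambda\|x\| < \|x\|$ for $x\ne 0$) and absolutely homogeneous; the only axiom that can break is the triangle inequality. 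Hence the whole task becomes engineering a failure of subadditivity of $A_{T_\lambda}$.

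For the construction I would work on the two-dimensional space $\X = (\C^2, \|\cdot\|_\infty)$, where $\|(x_1,x_2)\| = \max(|x_1|,|x_2|)$, and take the diagonal operator $T_\lambda(x_1,x_2) = (\lambda x_1, 0)$. One checks immediately that $\|T_\lambda\| = \lambda$, the supremum being attained at $(1,0)$, so $T_\lambda$ has exactly the prescribed norm. Here $A_{T_\lambda}(x)^2 = \max(|x_1|,|x_2|)^2 - \lambda^2|x_1|^2$, and the key observation is that in the region $|x_2|\ge |x_1|$ this is the indefinite form $|x_2|^2 - \lambda^2|x_1|^2$, so the sublevel sets of $A_{T_\lambda}$ are bounded by hyperbolas and its unit ball is \emph{not} convex. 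This is precisely the non-Hilbertian feature unavailable in the inner-product setting, where $A_T$ is always a seminorm.

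To make the failure explicit I would test subadditivity on the pair $u = (s,1)$ and $v = (-s,1)$ with $0 < s < 1$. A two-line computation gives $A_{T_\lambda}(u) = A_{T_\lambda}(v) = \sqrt{1 - \lambda^2 s^2}$, while $u+v = (0,2)$ yields $A_{T_\lambda}(u+v) = 2$. Since $2 > 2\sqrt{1-\lambda^2 s^2}$ for every $\lambda \in (0,1)$, the triangle inequality fails, so $A_{T_\lambda}$ is not a norm, and Theorem \ref{thm:102} then forces $T_\lambda$ to admit no isometric dilation.

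The only genuine obstacle is the choice of geometry: one must pick a non-Hilbert norm together with an operator that compresses only one coordinate, so that the subtracted term $\|T_\lambda x\|^2$ ignores the direction ($x_2$) in which the ambient norm is largest. Once this is arranged the violation is transparent, and the remaining verifications ($\|T_\lambda\| = \lambda$, strictness, and the evaluation of $A_{T_\lambda}$ on $u,v$) are routine. It is worth recording that the same mechanism works for $\lambda$ arbitrarily close to $0$, so smallness of the operator norm never guarantees dilatability in the Banach setting; this is exactly the contrast with Theorem \ref{thm:401} that the proposition is meant to highlight.
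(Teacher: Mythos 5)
Your proposal is correct and follows essentially the same route as the paper: both reduce the statement to Theorem \ref{thm:102} and then exhibit a violation of the triangle inequality for the defect function $A_{T_\lambda}$ on a two-dimensional space. The only difference is cosmetic --- you work on $(\C^2,\|\cdot\|_\infty)$ with $T_\lambda(x_1,x_2)=(\lambda x_1,0)$ and test vectors $(s,1)$, $(-s,1)$, whereas the paper uses $(\C^2,\|\cdot\|_1)$ with $T_\lambda(x,y)=\lambda(x+y,0)$ and the vectors $e_1$, $-e_2$ --- and your computations ($\|T_\lambda\|=\lambda$, $A_{T_\lambda}(u)=A_{T_\lambda}(v)=\sqrt{1-\lambda^2 s^2}$, $A_{T_\lambda}(u+v)=2$) are all correct.
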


\begin{proof}
Let $\X = (\C^2, \|\cdot\|_1)$. For every $\lambda\in (0,1)$, consider the operator $T_\lambda: \X \to \X$ defined by $T_\lambda(x,y) = \lambda(x+y, 0)$. Evidently, $\|T_\lambda\|= \lambda$. In view of Theorem \ref{thm:102}, it suffices to show that the function $A_{T_\lambda}(x) = \left(\|x\|^2 - \|T_\lambda x\|^2 \right)^{1\slash 2}$ does not define a norm on $\X$ in order to prove that $T_\lambda$ does not dilate to any isometry.  Let us suppose the contrary, i.e., let $A_{T_\lambda}$ define a norm on $\X$. Let $e_1 = (1,0)$ and $e_2 = (0,1)$. Then the triangle inequality implies that
\[
   A_{T_\lambda}(e_1 - e_2) = 2 \leq A_{T_\lambda}(e_1) + A_{T_\lambda}(e_2) = 2 \sqrt{1- \lambda^2},
\]
which is a contradiction to the fact that $\lambda\in (0,1)$. Hence $T_{\lambda}$ does not dilate to any isometry.
\end{proof}

Given below an example of a Banach space contraction $T$ such that $\overline{\D}$ is a spectral set of $T$ but $T$ does not dilate to any isometry.

\begin{eg}\label{eg:4111}
Let $\X = (\C^2, \|\cdot\|_1)$. Then, for every $\lambda \in (0, \frac{1}{3}]$, Proposition \ref{prop:4111} shows the existence of a contraction $T_{\lambda}$ on $\X$ with $\|T_{\lambda}\|=\lambda$ such that $T_{\lambda}$ does not dilate to any isometry. Again, Theorem \ref{thm:302} shows that $\overline{\D}$ is a spectral set for each such $T_{\lambda}$.
\end{eg}
We now give an example to show the other way, that is, an example of  a Banach space contraction $T$ that dilates to an isometry but $\overline{\D}$ is not a spectral set for $T$.
 
\begin{eg}\label{eg:4112}
Let $\X$ be a non-Hilbert Banach space and consider the backward shift operator $\widehat M_z$ on $\ell_2(\X)$, i.e., $\widehat M_z(a_0, a_1, \dots)=(a_1,a_2, \dots)$. Set $S_\lambda = \lambda \widehat{M}_z$ for every $\lambda\in (0,1)$. First we show that $S_{\lambda}$ dilates to an isometry for every $\lambda\in (0,1)$. By Theorem \ref{thm:102}, it suffices to show that $A_{S_\lambda}$ defines a norm on $\ell_2(\X)$. The positivity of $A_{S_\lambda}$ follows from the fact that $S_\lambda$ is a strict contraction. Indeed, $\|S_\lambda\| = \lambda < 1$. The homogeneity of $A_{S_\lambda}$ follows from the linearity of $S_\lambda$. To show the triangle inequality, consider the operator $S_\mu: \ell_2(\X) \to \ell_2(\X)$ defined by $S_\mu(\{x_n\}) = \{\mu_n x_n \}$, where $\mu_1 = 1$ and $\mu_n = \sqrt{1-\lambda^2}$ for all $n\geq 2$. It is easy to see that $S_\mu$ is a well defined bounded linear map on $\ell_2(\X)$ and $\|S_\mu\| = 1$. Also, for all $\{x_n\}\in \ell_2(\X)$ we have

\[
   A_{S_\lambda}(\{x_n\}) = \left( \sum_{n=1}^\infty \|x_n\|^2 - \sum_{n=2}^\infty \lambda^2 \|x_n\|^2 \right)^{1\slash 2} = \left( \|x_1\|^2 + \sum_{n=2}^\infty \mu_n^2 \|x_n\|^2 \right)^{1\slash 2} = \|S_\mu (\{x_n\})\|.
\]

Consequently, the triangle inequality for $A_{S_\lambda}$ follows from the linearity of $S_\mu$. Thus, $A_{S_{\lambda}}$ defines a norm on $\X$ and consequently $S_{\lambda}$ dilates to an isometry for every $\lambda \in (0,1)$.

\smallskip

Now, we show that there exists $\lambda \in (1\slash 3, 1)$ such that $\overline{\D}$ is not a spectral set of $S_\lambda$. Note that it follows from Theorem \ref{thm:302} that $\overline{\D}$ is a spectral set for $S_{\lambda}$ when $\lambda \in (0,1\slash 3]$. If possible let, $\overline{\D}$ is  a spectral set of $S_\lambda$ for all $\lambda\in (1\slash 3 ,1)$. For every $\alpha\in \D$, consider $\phi_\alpha\in \Aut(\D)$ defined by $\phi_\alpha(z) = \frac{z- \alpha}{1-\bar{\alpha}z}$. Then $\phi_\alpha(S_\lambda)$ is a contraction on $\X$ for all $\lambda\in (1\slash 3, 1)$. Let $\underline{x}\in \ell_2(\X)$ be arbitrary and set $\underline{y}= (I-\bar{\alpha}S_\lambda)\underline{x}$, where $I$ is the identity operator on $\ell_2(\X)$. Then $\|\phi_\alpha(S_\lambda)(\underline{y})\|\leq \|\underline{y}\|$ implies that
\begin{equation}\label{eq:4111}
   \|(S_\lambda - \alpha I) \underline{x}\| \leq \|(I - \bar{\alpha}S_\lambda)\underline{x}\|, \quad 1\slash 3 < \lambda < 1.
\end{equation}
Now, substituting $S_\lambda = \lambda \widehat{M}_z$ and taking limit as $\lambda\to 1$ in (\ref{eq:4111}) we have
\begin{equation} \label{eqn:411A}
   \| (\widehat{M}_z - \alpha I)\underline{x} \| \leq \|(I-\bar{\alpha}\widehat{M}_z)\underline{x}\|, \quad \underline{x}\in \ell_2(\X).
\end{equation}
Now, following the proof of $(ii) \Rightarrow (i)$ of Theorem \ref{thm:103}, we see that the inequality (\ref{eqn:411A}) turns $\X$ into a Hilbert space, which is a contradiction as we started with a non-Hilbert Banach space $\X$. Consequently, there exists $\lambda\in (1\slash 3, 1)$ such that $\overline{\D}$ is not a spectral set of $\lambda \widehat{M}_z$ on $\ell_2(\X)$, though $\lambda \widehat{M}_z$ dilates to an isometry for every $\lambda \in (0,1)$.
\end{eg}

\subsection{Spectral set vs. complete spectral set for Banach space contractions}\label{subsubsec:0413}

It is evident from the definition of spectral set and complete spectral set that if a compact set $K$ is a complete spectral set for a Banach space operator $T$ then it is also a spectral set for $T$. However, the fact that the closed unit disk $\overline{\D}$ is a spectral set for an operator on a Banach space does not guarantee that $\overline{\D}$ is also a complete spectral set for it. Consider any complex non-Hilbert Banach space $\X$. Then for any $f\in Rat(\overline{\D})$, we have $\|f(I_\X)\| = |f(1)|\leq \|f\|_{\infty, \overline{\D}}$. This shows that $\overline{\D}$ is a spectral set of the identity operator $I_\X$ on $\X$. The equivalence of conditions $(i)$ and $(vi)$ of Theorem \ref{thm:103} clearly shows that $\overline{\D}$ is not a complete spectral set of $I_\X$.

\subsection{Dilation vs. complete spectral set for Banach space operators}\label{Subsubsec:0412}

As we discussed in the previous Subsection, complete spectral set implies spectral set for a Banach space operator. In Example \ref{eg:4112}, we have shown the existence of a Banach space contraction $S_{\lambda}$ that dilates to an isometry but $\overline{\D}$ is not a spectral set for $S_{\lambda}$. So, naturally $\overline{\D}$ is not a complete spectral set for that $S_{\lambda}$ too. Here we construct an example of a Banach space contraction $T$ that has $\overline{\D}$ as a complete spectral set but $T$ does not dilate to any isometry.
\begin{eg}
Let $\HS$ be a Hilbert space. Consider the Banach space $\X = \HS \oplus_1 \HS$ that consists of elements of the form $(h_1, h_2)$ with $h_1, h_2\in \HS$ and is equipped with the norm $\|(h_1, h_2)\| = \|h_1\| + \|h_2\|$. For each $r\in (0,1)$, consider the operator $T_r: \X \to \X$ defined by $T_r(h_1,h_2 ) = (rh_1, \mathbf{0})$. First we show that $\overline{\D}$ is a complete spectral set for $T_r$ for any $r\in (0,1)$. Consider an arbitrary matricial rational function $F=[f_{ij}]_{n\times n}$ from $Rat_n(\overline{\D})$ for any $n\in \mathbb N$. Let $(\underline{x_1}, \underline{x_2}, \dots, \underline{x_n})\in \underbrace{\X \oplus_2 \X \oplus_2 \dots \oplus_2 \X}_{n-times}$ be arbitrary with $\underline{x_i} = (x_{i1}, x_{i2}) \in \HS \oplus_1 \HS =\X$ for $1\leq i \leq n$. Note that $f_{ij}(T_r)\underline{x_j} =( f_{ij}(rI_\HS)x_{j1}, \mathbf{0})$ for all $1\leq i,j \leq n$. So, we have
\begin{align*}
 \left\| F(T_r)(\underline{x_1}, \underline{x_2}, \dots, \underline{x_n}) \right\|  = \left( \sum_{i=1}^n \left\|\sum_{j=1}^n f_{ij}(T_r)\underline{x_j} \right\|^2 \right)^{1\slash 2}  & = \left( \sum_{i=1}^n \left\|\sum_{j=1}^n \left(f_{ij}(rI_\HS)\underline{x_{j1}}, \mathbf{0}\right) \right\|^2 \right)^{1\slash 2} \\
 & = \left\|F(rI_\HS)(x_{11}, x_{21}, \dots, x_{n1})\right\| \\
& \leq \|F\|_{\infty, \overline{\D}} \left(\sum_{i=1}^n \|x_{i1}\|^2 \right)^{1\slash 2} \\
& \leq \|F\|_{\infty, \overline{\D}} \left(\sum_{i=1}\|\underline{x_i}\|^2 \right)^{1\slash 2},
\end{align*}
where the second last inequality follows from Theorem \ref{thm:401}, since $rI_{\HS}$ is a contraction on the Hilbert space $\HS$ for any $r\in (0,1)$. So, we have that $\|F(T_r)\| \leq \|F\|_{\infty, \overline{\D}}$ and consequently $\overline{\D}$ is a complete spectral set of $T_r$ for any $r\in (0,1)$.

\smallskip

Now, we show that there exists $r\in (0,1)$ such that $T_r$ does not dilate to any isometry. By virtue of Theorem \ref{thm:102}, it suffices to show that $A_{T_r}$ does not define a norm on $\X$ for some $r\in (0,1)$. Suppose the contrary, i.e.,  assume that $A_{T_r}$ defines a norm on $\X$ for all $r\in (0,1)$. Let $e\in \HS$ be a unit vector. Consider $e_1= (e, \mathbf{0})$ and $e_2 = (\mathbf{0}, e)$ in $\X$. Then we have $\|e_i\|= 1$ for $i=1,2$ and $\|(e_1+e_2)\| = 2$. Note that $T_r(e_1) =( re,\mathbf{0})$ and $T_r(e_2) = (\mathbf{0}, \mathbf{0})$. So, we have $A_{T_r}(e_1) = \sqrt{1- r^2}$, $A_{T_r}(e_2) = 1$ and $A_{T_r}(e_1 + e_2) = \sqrt{4- r^2 }$. Thus, the triangle inequality implies that
\begin{equation}\label{eq:4121}
   \sqrt{4- r^2 } = A_{T_r}(e_1+e_2) \leq A_{T_r}(e_1) + A_{T_r}(e_2) = 1 + \sqrt{1- r^2}, \quad 0 < r < 1.
\end{equation}
Now, taking limit as $r\to 1$ in (\ref{eq:4121}) we have $\sqrt{3} \leq 1$, which is a contradiction. Hence, there exists $\widetilde r\in (0,1)$ such that $T_{\widetilde r}$ does not dilate to any isometry. However, we already have proved that $\overline{\D}$ is  a complete spectral set for $T_{\widetilde r}$.
\end{eg}

\vspace{0.1cm}

\noindent \textbf{Acknowledgement.} The first named author is supported by the ``Prime Minister's Research Fellowship (PMRF)" of Govt. of India with Award No. PMRF-1302045. The second named author is supported by ``Core Research Grant" of Science and Engineering Research Board (SERB), Govt. of India, with Grant No. CRG/2023/005223 and the ``Early Research Achiever Award Grant" of IIT Bombay with Grant No. RI/0220-10001427-001.

\medskip

\end{document}